\newcommand{\xRightarrow}[2][]{\ext@arrow 0359\Rightarrowfill@{#1}{#2}}
\newtheorem{Thm}{Theorem}[section]
\newtheorem{Cor}[Thm]{Corollary}
\newtheorem{Conj}[Thm]{Conjecture}
\newtheorem{Prop}[Thm]{Proposition}
\newtheorem{Lem}[Thm]{Lemma}
\theoremstyle{definition}
\newtheorem{example}[Thm]{Example}
\theoremstyle{remark}
\numberwithin{equation}{section}
\newcommand{\Aut}{\operatorname{Aut}}
\newcommand{\stab}{\operatorname{stab}}
\newcommand{\Out}{\operatorname{Out}}
\renewcommand{\dim}{\operatorname{dim}}
\newcommand{\ord}{\operatorname{ord}}
\newcommand{\Stab}{\operatorname{Stab}}
\newcommand{\D}{\mathcal{D}}
\newcommand{\m}{\textbf{m}}
\newcommand{\w}{\textbf{w}}
\newcommand{\Irr}{\operatorname{Irr}}
\newcommand{\Spin}{\operatorname{Spin}}
\newcommand{\Sol}{\operatorname{Sol}}
\newcommand{\GL}{\operatorname{GL}}
\newcommand{\SL}{\operatorname{SL}}
\newcommand{\PSp}{\operatorname{PSp}}
\newcommand{\DI}{\operatorname{DI}}
\renewcommand{\epsilon}{\varepsilon}
\renewcommand{\bar}{\overline}
\renewcommand{\hat}{\widehat}
\renewcommand{\leq}{\leqslant}
\newcommand{\K}{\mathcal{K} }
\newcommand{\N}{\mathcal{N}}
\newcommand{\F}{\mathcal{F}}
\newcommand{\C}{\mathbf{C}}
\newcommand{\A}{\mathbf{A}}
\newcommand{\E}{\mathcal{E}}
\newcommand{\W}{\mathcal{W}}
\renewcommand{\H}{\mathcal{H}}
\renewcommand{\O}{\mathcal{O}}
\renewcommand{\k}{\textbf{k}}
\begin{document}

%%%%%%%%%%%%%%%%%%%%%%%%%%%%%%%%%%%%%%%%%%%%%%%%%%%%%%%%%%%%
\title{A $2$-compact group as a spets}
%%%%%%%%%%%%%%%%%%%%%%%%%%%%%%%%%%%%%%%%%%%%%%%%%%%%%%%%%%%%

%\author{Radha Kessar}
%\address{Department of Mathematics, City, University of London EC1V 0HB, 
%United Kingdom}
%\email{radha.kessar.1@city.ac.uk}

%\author{Markus Linckelmann}
%\address{Department of Mathematics, City, University of London EC1V 0HB, 
%United Kingdom}
%\email{markus.linckelmann.1@city.ac.uk}

%\author{Justin Lynd}
%\address{Department of Mathematics, University of Louisiana at 
%Lafayette, Lafayette, LA 70504}
%\email{lynd@louisiana.edu}

\author{Jason Semeraro}
\address{Heilbronn Institute for Mathematical Research, Department of 
Mathematics, University of Leicester,  United Kingdom}
\email{jpgs1@leicester.ac.uk}

\begin{abstract} 
In 1993, Brou\'{e}, Malle and Michel initiated the study of \textit{spetses} on the Greek island bearing the same name. These are mysterious objects attached to non-real Weyl groups. In algebraic topology, a $p$-compact group $\mathbf{X}$ is a space which is a homotopy-theoretic $p$-local analogue of a compact Lie group. A connected $p$-compact group $\mathbf{X}$ is determined by its root datum which in turn determines its Weyl group $W_\mathbf{X}$. In this article we give strong numerical evidence for a connection between these two objects
 by considering the case when $\mathbf{X}$ is the exotic $2$-compact group $\DI(4)$ constructed by Dwyer--Wilkerson and $W_\mathbf{X}$ is the complex reflection group $G_{24} \cong \GL_3(2) \times C_2$. Inspired by results in Deligne--Lusztig theory for classical groups, if $q$ is an odd prime power we propose a set $\Irr(\mathbf{X}(q))$ of `ordinary irreducible characters' associated to the space $\mathbf{X}(q)$ of homotopy fixed points under the unstable Adams operation $\psi^q$. Notably $\Irr(\mathbf{X}(q))$ includes the set of unipotent characters associated to $G_{24}$ constructed by Brou\'{e}, Malle and Michel from the Hecke algebra of $G_{24}$ using the theory of spetses. By regarding $\mathbf{X}(q)$ as the classifying space of a Benson--Solomon fusion system $\Sol(q)$ we formulate and prove an analogue of Robinson's ordinary weight conjecture that the number of characters of defect $d$ in $\Irr(\mathbf{X}(q))$ can be counted locally. 
 %We also consider some similar conjectures at odd primes. 

%First we show that the number of ordinary irreducible characters of defect $d$ in a hypothetical $2$-block with Benson--Solomon fusion system $\Sol(q)$ is a polynomial in the $2$-part of $q^2-1$. The analogous statement for a bonafide $2$-block of a Lie-type group $G(q)$ follows from Jordan's decomposition of characters. By regarding $\DI(4)$ as a spets we define a set $\Irr(\Sol(q))$ of irreducible characters coming unipotent characters of centralizers of $2$-elements in $\Sol(q)$, including the identity element. This mimics the situation in classical Deligne--Lusztig theory with $\Sol(-)$ behaving like a self-dual finite reductive group. We propose that the appropriate analogue of Robinson's ordinary weight conjecture should hold for $\Sol(q)$ 

%Precisely, if $q$ be an odd prime power and $\Sol(q)$ is a Benson--Solomon $2$-fusion system, we prove that that the number of ordinary irreducible characters of defect $d$ in a hypothetical $2$-block with fusion system $\Sol(q)$ is independent of $q$ in the sense that it   The analogous statement for a bonafide $2$-block of a Lie-type group $G(q)$ is a consequence of Deligne--Lusztig theory. When $q=3$, we mimic Lusztig's Jordan decomposition of characters to show that the number of ordinary irreducible characters is determined by the unipotent characters of $\Sol(3)$, as computed by Brou\'{e}--Malle--Michel. 
\end{abstract}

\keywords{fusion system, block, $p$-compact group, spetses}

\subjclass[2010]{}

\maketitle

%%%%%%%%%%%%%%%%%%%%%%%%%%%%%%%%%%%%%%%%%%%%%%%%%%%%%%%%%%%%%%%%%
\section{Introduction}
Let $k$ be an algebraically closed field of characteristic $p$. In \cite[Section 4]{kessar2018weight}, the authors introduce the concept of an \textit{$\F$-compatible family} associated to a saturated fusion system $\F$ on a finite $p$-group $S$. This is a family $\alpha = (\alpha_Q)_{Q \in \F^c}$ of cohomology classes $\alpha_Q \in H^2(\Out_\F(Q),k^\times)$ satisfying certain compatibility conditions, where $\F^c$ denotes the set of all $\F$-centric subgroups. The motivation for considering such families comes from block theory where, if $B$ is a block of $kG$ for some finite group $G$, there is a compatible family of K\"{u}lshammer--Puig classes $\alpha$ associated to the fusion system $\F$ of $B$ on its defect group. In this situation, we say that the pair $(\F,\alpha)$ \textit{realizes} $B$. Conjectures relating the local and global properties of $B$ involve $\alpha$ on the local side, and for this reason it becomes possible to make several conjectures for arbitrary pairs $(\F,\alpha)$ (see \cite[Section 2]{kessar2018weight}).

In a separate paper together with Lynd \cite{lynd2017weights}, the author shows that, when $\F$ is a Benson--Solomon fusion system $\Sol(q)$, any compatible family associated to $\F$ is trivial. Moreover the set of all $\F$-centric radical subgroups and their $\F$-outer automorphism groups is listed, and used to show that the number of weights associated to $\F$ is $12$, independently of $q$. Here we prove an analogous result (Theorem \ref{t:main}) for the \textit{ordinary} weights associated to $\Sol(q)$ and its close relative, the $2$-fusion system $\H$ of $\Spin_7(q)$. Precisely, we show that for each $d \ge 0$ the number of $2$-weights of defect $d$ can be expressed as a polynomial in the $2$-part of $q^2-1$. As a consequence, for the principal $2$-block $B$ of $\Spin_7(q)$ we prove Robinson's Ordinary Weight Conjecture that these numbers count characters of defect $d$ in $B$. 

There is no block with fusion system $\Sol(q)$ (see \cite{Kessar2006}, \cite[Theorem 9.34]{CravenTheory}), but it seems one can still construct a global object and conjecture a local-global correspondence in which $\Sol(q)$ features on the local side. Via the natural generalizations of certain results in Deligne--Lusztig theory it is possible to associate a set of irreducible characters to $\Sol(q)$ with a distinguished subset of unipotent characters being exactly those constructed using the  Brou\'{e}--Malle--Michel theory of spetses \cite{broue1999towards}.  In Section \ref{s:conclude} we state and prove an analogue of Robinson's weight conjecture that the number of such characters is given purely in terms of the fusion system. Finally, we verify several of the weight conjectures considered in \cite[Section 2]{kessar2018weight} for $\Sol(q)$.  

%We also state similar conjectures for two other families of fusion systems $\F(q)$ at odd primes which, like $\Sol(q)$, are obtained from homotopy fixed points of $p$-compact groups. 

\section{Main results}\label{s:main}
Let $\F$ be a saturated fusion system on a finite $p$-group $S$, $\alpha$ be an $\F$-compatible family and $d \ge 0$ be a non-negative integer. Following \cite[Section 2]{kessar2018weight}, for any $\F$-centric subgroup $P$ of $S$ let $\N_P$
be the set of non-empty normal chains $\sigma$ of $p$-subgroups of 
$\Out_\F(P)$ starting  at the trivial subgroup; that is, chains of 
the form
$$\sigma = (1=X_0 < X_1 <\cdots < X_m)$$
with the property that $X_i$ is normal in $X_m$ for $0\leq i\leq m$. 
We set $|\sigma| = m$, and call $m$ the {\it length} of $\sigma$. We also define
$$\Irr^d(P):=\{\mu \in \Irr(P) \mid \mu(1)=p^{-d}|P|\},$$ 
the set of ordinary irreducible characters of $P$ of defect $d$ and 
define $\W^d_P = \N_P \times \Irr^d(P)$. The obvious actions of the group $\Out_\F(P)$ on 
$\N_P$ and $\Irr^d(P)$ yield an action on  $\W^d_P$. If $(\sigma, \mu) \in \mathcal{W}_P^d$, let $I(\sigma)$ and $I(\sigma,\mu)$ denote the stabilizers in $\Out_\F(P)$ of $\sigma$ and $(\sigma,\mu)$ respectively
under these actions. Now set
$$\w_P(\F,\alpha,d):=
\sum_{\sigma \in \N_P/\Out_\F(P)} (-1)^{|\sigma|} 
\sum_{\mu \in \Irr^d(P)/I(\sigma)} z(k_\alpha I(\sigma,\mu)),$$ where  $k_\alpha I(\sigma,\mu)$  is group algebra of $I(\sigma,\mu)$ twisted with respect to $\alpha$ and $z(\cdot)$ denotes the number of projective simple modules. Finally, we set
$$ \m(\F,\alpha,d):=\sum_{P \in \F^{c}/\F} \w_P(\F,\alpha,d) \hspace{4mm} \mbox{ and } \hspace{4mm} \m(\F,\alpha):=\sum_{d \ge 0} \m(\F,\alpha,d). 
$$

By \cite[Lemma 7.5]{kessar2018weight}, the quantities $\m(\F,\alpha,d)$ and  $\m(\F,\alpha)$ remain unchanged on restricting the sums to isomorphism classes of $\F$-centric radical subgroups. If $d \ge 0$ is an integer and $B$ is a block of $kG$ for some finite group $G$, we write $\k_d(B)$ for the number of irreducible characters of $G$ of defect $d$ in the block $B$. The relevance of the invariant $\m(\F,\alpha,d)$ is the following:

\begin{Conj}[Robinson]\label{c:rob}
Let $\F$ be a saturated fusion system on a finite $p$-group $S$, and let $\alpha$ be an $\F$-compatible family. Suppose that $(\F,\alpha)$ realizes a block $B$ of $kG$ for some finite group $G$. Then $$\normalfont{\m}(\F,\alpha,d)=\k_d(B).$$
\end{Conj}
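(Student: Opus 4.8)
The plan is to regard Conjecture~\ref{c:rob} as a local-global counting statement of Dade type and to attack it by the two-stage strategy that underlies essentially all progress on the neighbouring conjectures of Alperin, Dade and Robinson: first reduce the equality to quasi-simple groups by Clifford theory, and then verify the resulting inductive condition family-by-family using the classification of finite simple groups. Since the invariant $\m(\F,\alpha,d)$ is manifestly local, the whole content of the conjecture is to show that this local alternating sum reconstructs the global number $\k_d(B)$, so the natural first move is to rewrite the local side in a form directly comparable with a known global expression for $\k_d(B)$.

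The first step is therefore to convert the fusion-theoretic sum into a sum over $p$-chains in $G$ itself. Using the remark following the definition of $\m$, namely \cite[Lemma 7.5]{kessar2018weight}, one may restrict the sum defining $\m(\F,\alpha,d)$ to $\F$-classes of $\F$-centric radical subgroups $P$. I would then match, for each such $P$ and each normal chain $\sigma \in \N_P$, the twisted weight contribution $\sum_{\mu} z(k_\alpha I(\sigma,\mu))$ against the defect-$d$ contribution of the corresponding radical $p$-chain of $G$ to Dade's alternating sum. The essential point is that $\F$ is the fusion system of $B$ and that $\alpha$ is the family of K\"{u}lshammer--Puig classes attached to $B$; consequently the twisted group algebra $k_\alpha I(\sigma,\mu)$ is exactly the algebra governing the local Clifford theory of the block of the chain normaliser lying over $B$, and $z(k_\alpha I(\sigma,\mu))$ encodes precisely the contribution of that chain stabiliser to the count of characters of defect $d$. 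Carrying this bookkeeping through, $\m(\F,\alpha,d)$ should become the Dade alternating sum $\sum_{\sigma}(-1)^{|\sigma|}\k_d(B_\sigma)$ taken over $G$-classes of radical $p$-chains $\sigma$, where $B_\sigma$ ranges over the blocks of the chain stabilisers covering $B$.

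With this translation in hand, the conjecture is reduced to establishing the equivalence with Dade's Ordinary (or projective) Conjecture for the single block $B$, and then proving that conjecture. For the latter I would invoke the reduction of Dade's conjecture to quasi-simple groups and verify the required inductive condition for each family of finite simple groups: the sporadic and alternating groups by direct computation, the groups of Lie type in defining characteristic via their parabolic geometry, and those in non-defining characteristic via Jordan decomposition and the Deligne--Lusztig parametrisation of characters by defect.

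The hardest part will be twofold. First, the translation in the second step requires that the K\"{u}lshammer--Puig class of $B$ at \emph{each} chain stabiliser agree with the component $\alpha_Q$ of the compatible family prescribed by $(\F,\alpha)$; verifying this cohomological compatibility uniformly along all chains, rather than at a single centric subgroup, is delicate, since it is exactly the data recording the non-split extensions in the local Clifford theory. Second, and more seriously, Dade's Ordinary Conjecture is itself open in general, so the verification for groups of Lie type is not merely computational but rests on deep and as-yet-incomplete parametrisations of characters and blocks. It is precisely here that a complete proof of Conjecture~\ref{c:rob} remains out of reach at present, and one can only carry the programme through for restricted families, such as the defining-characteristic situation addressed in Theorem~\ref{t:main}.
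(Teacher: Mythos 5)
You have set out a research programme, not a proof, and your closing admission is the crux: the final step of your argument is an appeal to Dade's Ordinary/Projective Conjecture, which is open, so the argument cannot be completed as written. This is the right thing to notice, because the statement you were asked to prove is stated in the paper as Conjecture \ref{c:rob} precisely because no proof exists; the paper neither proves it nor claims to. Beyond the reliance on an open conjecture, two steps of your sketch are themselves nontrivial and left unargued: (i) the translation of $\m(\F,\alpha,d)$ into a Dade-type alternating sum $\sum_{\sigma}(-1)^{|\sigma|}\k_d(B_\sigma)$ over radical $p$-chains of $G$ is not mere bookkeeping --- the known results in this direction relate the ordinary weight conjecture to Dade's projective conjecture through careful Clifford theory, and the compatibility of the K\"{u}lshammer--Puig family $\alpha$ with the $2$-cocycles arising at every chain stabiliser (not just at a single centric subgroup) is exactly the delicate point you flag but do not resolve; and (ii) even granting a reduction to quasi-simple groups, the family-by-family verification for groups of Lie type is far from complete in the literature, so the CFSG step is also an appeal to unproven statements.

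Where the paper actually engages with Conjecture \ref{c:rob}, it does something entirely different from your programme: it verifies the conjecture in a single case, the principal $2$-block $B$ of $\Spin_7(q)$ (Theorem \ref{t:owcforspin}), by computing both sides explicitly and comparing. The local side $\m(\H,0,d)$ is computed as a polynomial in $x=2^{l}$ via the classification of centric radical subgroups and the chain-by-chain character counts of Theorem \ref{t:main} (Table \ref{table:mdod}); the global side $\k_d(B)$ is computed using Proposition \ref{p:cabanes} (Cabanes--Enguehard), which identifies $\Irr(B)$ with $\bigcup_s \E(G,s)$ over $2$-elements $s$ of the dual group $\mathbf{C}_3(q)$, with degrees obtained from Jordan decomposition via (\ref{e:chisl}) and CHEVIE, tabulated in Tables \ref{table:1mod4} and \ref{table:3mod4}; the defect statistics then match term by term. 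So your proposal and the paper differ fundamentally in scope and method: yours is a conditional reduction scheme for the general statement, while the paper's contribution is an unconditional verification in one family by direct bilateral computation. If you want to salvage your write-up, recast it as a discussion of why Conjecture \ref{c:rob} is expected to follow from Dade-type conjectures, and prove only what can be proved --- which, here, is the $\Spin_7(q)$ case by the computational route above.
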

We fix some notation which will be used until Section \ref{s:conclude}. Let $q$ be a (fixed) odd prime power, define $l:=v_2(q^2-1)-3$ and set $x:=2^l$, where $v_p$ is the $p$-adic valuation. Notice that $l \ge 0$ since $8 \mid q^2-1$. Moreover set:
\begin{itemize}
\item[(1)] $H$ equal to $\Spin_7(q)$ and $S$ equal to a (fixed) Sylow $2$-subgroup of $H$;
\item[(2)] $\H = \H(q) = \F_S(H)$ equal to the $2$-fusion system of $H$ on $S$;
\item[(3)] $\F=\F(q):=\Sol(q)$, a Benson--Solomon fusion system on $S$ which contains $\H$ (see \cite{lynd2017weights}.)
\end{itemize}

%DISCUSS THE BENSON SOLOMON AND SPIN FUSION SYSTEMS

In \cite[Theorem 1.1]{lynd2017weights} it was shown that there are no non-trivial $\F$-compatible families associated to $\F$. Here is our first main result:

\begin{Thm}\label{t:main}
Let $\D \in \{\H,\F\}$ and $d \ge 0$. $\normalfont{\m}(\D,0,d)$ is expressible as a rational polynomial in $x$. Moreover its precise values are listed in Table \ref{table:mdod} when  $\normalfont{\m}(\D,0,d)$ is (possibly) non-zero. \end{Thm}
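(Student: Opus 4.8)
The plan is to reduce the theorem to an explicit finite computation over the $\F$-centric radical subgroups of $S$, exploiting the fact that the relevant compatible family $\alpha$ is trivial (so $\alpha = 0$ throughout, by \cite[Theorem 1.1]{lynd2017weights}) and the reduction to radical subgroups afforded by \cite[Lemma 7.5]{kessar2018weight}. Since $z(k_\alpha I)$ with $\alpha=0$ is just the number of simple $kI$-modules, i.e.\ the number of $p'$-conjugacy classes of $I$, each local term $\w_P(\D,0,d)$ becomes an alternating sum over normal chains of $p$-subgroups of $\Out_\D(P)$ of class counts of stabilizers, weighted by defect-$d$ characters of $P$. The strategy is therefore: (i) assemble the list of $\D$-centric radical subgroups $P$ up to $\D$-conjugacy together with their outer automorphism groups $\Out_\D(P)$; (ii) for each such $P$, understand the $\Out_\D(P)$-set $\Irr(P)$ graded by defect $d = v_p(|P|/\mu(1))$; (iii) enumerate the normal chains $\sigma \in \N_P$ up to $\Out_\D(P)$-action and compute the stabilizers $I(\sigma,\mu)$; and (iv) sum everything according to the definition of $\m(\D,0,d)$.

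\medskip

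For step (i), I would invoke the classification already recorded in \cite{lynd2017weights}, which lists all $\F$-centric radical subgroups of $S$ for $\F=\Sol(q)$ and their $\F$-outer automorphism groups; the analogous data for $\H = \F_S(\Spin_7(q))$ should be extractable from the known $2$-local structure of $\Spin_7(q)$, with $\Sol(q)$ containing $\H$ on the same Sylow $S$. The essential structural input is that these subgroups, their automorphism groups, and the character-degree data all depend on $q$ only through the parameter $x = 2^l$ with $l = v_2(q^2-1)-3$: this is what makes polynomiality in $x$ even plausible. Concretely, the orders of the $P$, the orders of $\Out_\D(P)$, and the defect values $v_p(|P|/\mu(1))$ should all be controllable by $x$, so that for each fixed $d$ only finitely many congruence classes of subgroups contribute and each contributes a term that is polynomial in $x$.

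\medskip

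For steps (ii)–(iv), the computation of $\w_P(\D,0,d)$ for each $P$ is the arithmetic heart. Here $z(k_0 I(\sigma,\mu))$ is the number of $p$-regular classes of the stabilizer $I(\sigma,\mu)\leq \Out_\D(P)$, so one needs, for each orbit representative $\mu$ of defect $d$ and each chain orbit $\sigma$, to identify the stabilizer and count its $p'$-classes. Because $\Out_\D(P)$ and its action on $\Irr^d(P)$ vary with $q$ only through $x$, the orbit sizes, stabilizers, and their $p'$-class numbers should be expressible as functions of $x$; the alternating-sign sum over chains then collapses to a rational polynomial in $x$. I would organize this by first treating the subgroups whose automorphism groups are independent of $q$ (these give constant or simply-scaling contributions) and then the one or two families whose structure genuinely grows with $x$.

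\medskip

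I expect the main obstacle to be step (iii): correctly enumerating the normal chains of $p$-subgroups in each $\Out_\D(P)$ up to the outer action and pinning down the stabilizers $I(\sigma,\mu)$ of pairs, since this couples the chain combinatorics to the character action and is where sign cancellations and hidden $q$-dependence are easiest to mishandle. A secondary difficulty is verifying uniformity in $q$ rigorously rather than case-by-case, i.e.\ proving that the defect distribution of $\Irr(P)$ and the automorphism action really do depend only on $x$; I would address this by tracking the abelian structure of the relevant subgroups $P$ (whose exponents and ranks are governed by $v_2(q^2-1)$) so that the character degrees and the induced permutation action are visibly parametrized by $x$. Once uniformity is established, assembling $\m(\D,0,d) = \sum_{P} \w_P(\D,0,d)$ and reading off the closed forms to populate Table \ref{table:mdod} is bookkeeping.
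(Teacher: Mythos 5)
Your overall framework (reduce to centric radicals via \cite[Lemma 7.5]{kessar2018weight}, compute each $\w_P(\D,0,d)$ locally, sum) matches the paper's, but there are two genuine problems. First, a foundational error: $z(k_\alpha I(\sigma,\mu))$ with $\alpha=0$ is \emph{not} the number of simple $kI(\sigma,\mu)$-modules (i.e.\ of $p'$-classes); it is the number of \emph{projective} simple modules, equivalently the number of ordinary irreducible characters of defect zero of $I(\sigma,\mu)$. This distinction drives the whole computation: in the paper's Example \ref{ex:csu} (for $P=C_S(U)$), orbits whose stabilizer is $\langle [c_\tau]\rangle\cong C_2$ contribute $z(kC_2)=0$ and are discarded, so only free orbits survive; under your reading every orbit would contribute $1$, and every entry of Tables \ref{table:wpdod} and \ref{table:mdod} would come out wrong. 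Second, your ``essential structural input'' --- that the centric radical subgroups, their automorphism groups and the character data depend on $q$ only through $x$ --- is false at $l=0$: by \cite[Tables 1,4]{lynd2017weights} the classification of $\D$-centric radicals does \emph{not} admit a uniform treatment across $l=0$ and $l>0$. The paper handles $l=0$ by an entirely separate computation (Table \ref{table:wpdodl=0}) and then \emph{observes} that the generic polynomials happen to specialize correctly at $l=0$; this agreement is one of the surprises the paper emphasizes, not something your uniformity assumption can deliver.

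Beyond these gaps, you are also missing the mechanism that makes the $l>0$ computation rigorous and finite. The paper does not attempt your uniform-in-$x$ hand analysis of orbits, chains and stabilizers. Instead it proves a degree bound (Corollary \ref{c:rat}): $\w_P(\D,0,d)$ is a rational polynomial in $x$ of degree at most the type $a$ of $P$, established via the method of little groups (Theorem \ref{t:cliff}), the character theory of generalized quaternion groups (Lemma \ref{l:genquat}), and an inclusion--exclusion argument for orbits of subgroups of $\GL_3(2)$ on $(\mathbb{Z}/2^k)^3$ (Lemma \ref{l:gl32action}). With the degree bound in hand, each polynomial is determined by its values at $1\le l\le a+1$, and those finitely many values are computed by MAGMA from explicit models $M_P$ of $N_\D(P)$ (built from a $6$-dimensional representation of $K$, from $\Spin_7(3)$ and $\Spin_7(9)$, and from Lemma \ref{l:cse}). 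Your route instead requires carrying out, uniformly in $l$, the chain/stabilizer/orbit enumeration for groups such as $\Out_\F(C_S(E))\cong\GL_3(2)$ acting on $\Irr\bigl((\mathbb{Z}/2^{l+2})^3\bigr)$ together with all normal chains of $2$-subgroups of $\GL_3(2)$ --- precisely the step you flag as the main obstacle and leave unresolved, and precisely the ``lengthy and unenlightening'' computation the paper's interpolation strategy is designed to avoid.
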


\begin{table}[]
\renewcommand{\arraystretch}{1.4}
\centering
\caption{Possibly non-zero values of $\m(\D,0,d)$ for $\D \in \{\H,\F\}$ }
\label{table:mdod}
\begin{tabular}{|c|c|c|}
\hline
$d$       & $\m(\H,0,d)$                                                             & $\m(\F,0,d)$ \\ \hline
$4$     & $2$                                                                  &  $2$           \\ \hline
$l+5$   & $2 (x-1)$                                                    &  $0$           \\ \hline
$l+6$    & $2x+7$                                                          &  $2x-1$           \\ \hline
$l+7$    & $4$                                                                  &  $4$           \\ \hline
$2l+5$   & $3x(x-1)$                                         &  $x(x-1)$           \\ \hline
$2l+6$    & $12x$                                                    &  $4x$           \\ \hline
$3l+6$    & $\frac{4}{3}  x^3-3x^2+\frac{11}{3} x-1$ &  $\frac{4}{21} x^3-x^2+\frac{11}{3} x-\frac{20}{7}$           \\ \hline
$3l+7$     & $12x^2-8x+2$                                         &  $4x^2-6x+4$           \\ \hline
$3l+8$    & $14x-4$                                                  &  $6x-4$           \\ \hline
$3l+9$    & $8x+4$                                                          &  $8x+4$           \\ \hline
$3l+10$   & $16$                                                                 &  $16$            \\ \hline \hline
$\m(\D,0)$ & $\frac{4}{3} x^3+12x^2+\frac{92}{3}x+28$ & $\frac{4}{21} x^3+4x^2+\frac{50}{3}x+\frac{155}{7}$ \\ \hline
\end{tabular}
\end{table}

As a byproduct of our computations, we obtain:

\begin{Thm}\label{t:owcforspin}
For all odd prime powers $q$, Conjecture \ref{c:rob} holds for the principal $2$-block of $\Spin_7(q)$.
\end{Thm}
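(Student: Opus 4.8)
The plan is to deduce Theorem~\ref{t:owcforspin} from Theorem~\ref{t:main} by verifying, for each defect $d$, that the locally-computed invariant $\m(\H,0,d)$ agrees with the block-theoretic quantity $\k_d(B)$, where $B$ is the principal $2$-block of $H = \Spin_7(q)$. Since the Benson--Solomon system $\F$ admits no block, the relevant case of Conjecture~\ref{c:rob} is the one for $\D = \H$, whose fusion system $\H = \F_S(H)$ genuinely arises from the finite group $H$; the $\alpha$-component is trivial because Theorem~\ref{t:main} works with $\alpha = 0$, so the K\"{u}lshammer--Puig classes play no role and we are reduced to comparing two explicit functions of $d$. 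The strategy is therefore: first assemble the left-hand side from the $\m(\H,0,d)$ column of Table~\ref{table:mdod}; then independently compute $\k_d(B)$ from the known ordinary character theory of $\Spin_7(q)$; and finally check the two agree defect-by-defect.

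The key steps, in order, are as follows. First I would record that for the principal block $B$ of $H$, a character $\chi \in \Irr(B)$ has defect $d = v_2(|H|_2) - v_2(\chi(1)) = v_2(|S|) - v_2(\chi(1))$, so computing $\k_d(B)$ amounts to tabulating the $2$-adic valuations of the degrees of the characters in the principal block. For $\Spin_7(q)$ with $q$ odd this data is accessible through the Deligne--Lusztig parametrization of $\Irr(H)$: the unipotent characters of $\Spin_7(q) = B_3(q)$ are known with their explicit degree polynomials, and the remaining characters in the principal block are organized via Jordan decomposition into Lusztig series $\mathcal{E}(H,s)$ indexed by semisimple $2$-elements $s$ in the dual group, with the principal block collecting those series for which $s$ is a $2$-element. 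Second, for each such $s$ I would identify the relevant centralizer $C_{H^*}(s)$, read off the degrees of the constituent unipotent characters of the centralizer, multiply by the index factor $[H^* : C_{H^*}(s)]_{p'}$, and extract the $2$-part; grouping by $d$ then yields $\k_d(B)$ as a function of $q$, which I would re-express in terms of $x = 2^{\,v_2(q^2-1)-3}$. Third, I would place these values against the $\m(\H,0,d)$ column and verify equality row by row, including the top row $\m(\H,0) = \sum_d \m(\H,0,d)$ matching $\k(B) = |\Irr(B)|$ as a consistency check.

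The main obstacle I expect is the explicit enumeration of the principal block of $\Spin_7(q)$ together with the $2$-adic valuations of all the character degrees. This requires controlling the Lusztig series attached to every semisimple $2$-element $s$ of the dual group $\SO_7(q) \cong \Sp_6(q)$ (or $\PSp_6$, depending on the isogeny), determining which series land in the principal $2$-block via the Bonnaf\'{e}--Rouquier / Brou\'{e}--Malle--Michel theory of $e$-cuspidal pairs and the structure of $C_{H^*}(s)$, and then keeping careful track of how the degree polynomials factor $2$-adically as $q$ varies over odd prime powers with a fixed value of $l = v_2(q^2-1) - 3$. The bookkeeping is delicate because the valuations depend on $q \bmod 4$ and on $l$, which is exactly why the answer is polynomial in $x = 2^l$ rather than in $q$ itself; ensuring that the $\k_d(B)$ computation produces the \emph{same} polynomials in $x$ as the local side, uniformly in $q$, is where the real work lies. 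Once the global character-degree data is tabulated correctly, the final comparison with Table~\ref{table:mdod} is a routine matching, so the theorem follows.
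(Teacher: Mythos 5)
Your proposal follows essentially the same route as the paper: the paper's proof invokes the Cabanes--Enguehard result (Proposition \ref{p:cabanes}) stating that $\Irr(B)$ is the union of Lusztig series $\E(G,s)$ over $2$-elements $s$ of the dual group $\mathbf{C}_3(q)$, computes the character degrees via the Jordan decomposition formula $\chi_{s,\lambda}(1)=|G^*:C_{G^*}(s)|_{r'}\,\lambda(1)$ with the centralizer data enumerated in CHEVIE (split into the cases $q \equiv 1$ and $q \equiv 3 \bmod 4$, exactly as you anticipate), and then matches the resulting defect counts against the $\m(\H,0,d)$ column of Table \ref{table:mdod}. The only discrepancies are cosmetic: the paper identifies the dual group as adjoint of type $\mathbf{C}_3$ rather than your $\SO_7(q)\cong\Sp_6(q)$, and it cites Cabanes--Enguehard directly rather than the $e$-cuspidal pair machinery you mention.
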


We now explain why Theorem \ref{t:main} is surprising when $\D=\F.$ In \cite{lynd2017weights}, the local structure of $\D$ is determined by treating the case $l = 0$ separately from the generic case $l > 0$ because the structure of the centric radical subgroups and their automorphism groups does not admit a uniform treatment (see \cite[Tables 1,4] {lynd2017weights}.)  As a result we must handle the computation of $\m(\D,0,d)$ in the case $l=0$ separately. On the other hand when a polynomial we obtain for $l > 0$ is specialised to the case $l=0$ we always get the correct answer! When $\D=\H$, this phenomenon is explained by the fact that Theorem \ref{t:owcforspin} equates $\m(\D,0,d)$ with a polynomial obtained using character counting methods in Deligne--Lusztig theory (see Section \ref{s:conjectures}) which do not distingush between the cases $l=0$ and $l>0$. 

Now $\F$ is also the fusion system of a $2$-local finite group with classifying space $B\mathbf{X}(q)$, where $\mathbf{X}=\DI(4)$ is the space constructed by Dwyer--Wilkerson \cite{DwyerWilkerson1993} and $\mathbf{X}(q)$ denotes the space of homotopy fixed points under the unstable Adams operation $\psi^q$ acting on $\mathbf{X}$ (see Section \ref{ss:pcomp}). For this reason, we propose that the genericity of the polynomials in Table \ref{table:mdod} can be explained by regarding $\DI(4)$ as a \textit{spets} associated to the non-real Weyl group $G_{24}$. Thus we define a set $\Irr^u(\mathbf{X}(q))$ of unipotent characters to be those given in \cite{broue2014split} for $G_{24}$. What about the remaining characters? 
If $B$ denotes the principal $2$-block of $H$, a result of Cabanes--Enguehard in Deligne--Lusztig theory (Proposition \ref{p:cabanes}) states that  $\Irr(B)$ is a union $\bigcup_s \E(H,s)$ of \textit{Lusztig series} taken over conjugacy class representatives of $2$-elements in the Langlands dual group $H^*:=\Aut(\PSp_6(q))$. Here, $\E(H,s)$ is in bijection with the set of unipotent characters of the centralizer $C_{H^*}(s)$ (see the discussion which precedes Proposition \ref{p:cabanes}.)  Treating $\mathbf{X}(q)$ like a Langlands self-dual finite group of Lie-type, and using the fact that centralizers of non-trivial $2$-elements in $\F$ are contained in $\H$ we are led to define \begin{equation}\label{e:fls} \E(\mathbf{X}(q),s):=\begin{cases} \Irr^u(\mathbf{X}(q)) & \mbox{ if $s=1$;} \\ \E(H,s) & \mbox{ otherwise, }
 \end{cases} \mbox{ and } \Irr(\mathbf{X}(q))=\bigcup_s \E(\mathbf{X}(q),s),\end{equation} where $s$ runs over a complete set of (fully $\F$-centralized) $\F$-conjugacy class representatives. We then set $\k_d(\mathbf{X}(q))$ equal to the  number of characters of defect $d$ in $\Irr(\mathbf{X}(q))$ and prove that the following analogue of Robinson's Conjecture \ref{c:rob} holds for the space $\mathbf{X}(q)$:

\begin{Thm}\label{t:exowc}
For each $d > 0$, we have $\normalfont{\m}(\F(q),0,d)=\normalfont{\k}_d(\mathbf{X}(q))$.
\end{Thm}

We prove Theorem \ref{t:exowc} in Section \ref{ss:spetses}. Note that the restriction on $d$ is necessary. Indeed six of the twenty-two spetsial characters in $\Irr^u(\mathbf{X}(q))$ have defect $0$ (possibly corresponding to `other blocks' associated to $\mathbf{X}(q)$). 

%Conjecture \ref{c:exowc} naturally leads to speculation about the situation when $p > 2$. In Section \ref{s:conclude} we propose that two members of the Aguad\'{e} family of $p$-compact groups with Weyl groups $G_{29}$ and $G_{34}$ also be regarded as spetses when $p=5,7$ respectively. We define Lusztig series as in (\ref{e:fls}) and formulate the appropriate weight conjectures, proving both for characters of maximal defect.

% and may be related to the fact that $\Sol(q)$ is a the fusion system of a \textit{Chevalley $2$-local finite group} as defined in \cite{BrotoMoller2007}. 

Finally, we use Theorem \ref{t:main} to give  further evidence towards the various conjectures considered in \cite[Section 2]{kessar2018weight}. Recall the definition of the invariant $\k(\D,\alpha)$ as given in \cite[Section 1]{kessar2018weight}. One of the main results in \cite{kessar2018weight} is that Alperin's Weight Conjecture implies $\k(\D,\alpha)=\m(\D,\alpha)$.

\begin{Thm}\label{t:conjectures}
Let $\D \in \{\H,\F\}$ and assume that  $\normalfont{\k}(\D,0)=\normalfont{\m}(\D,0)$. Then \cite[Conjectures 2.1, 2.5, 2.8, 2.9 and 2.10]{kessar2018weight} are true for the pair $(\D,0)$. 
\end{Thm}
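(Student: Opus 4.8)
The plan is to reduce everything to the single computational hypothesis $\k(\D,0)=\m(\D,0)$ and then invoke the implications already established in \cite{kessar2018weight}, which relate the various conjectures to one another once this numerical equality is known. First I would recall precisely what each of \cite[Conjectures 2.1, 2.5, 2.8, 2.9, 2.10]{kessar2018weight} asserts for an abstract pair $(\F,\alpha)$, noting that they are phrased purely in terms of the fusion-theoretic invariants $\k(\D,\alpha)$, $\m(\D,\alpha)$ and their defect-graded refinements, together with the $\F$-compatible family $\alpha$. Since by Theorem \ref{t:main} (and the vanishing result $\lim_{[S(\F^c)]}\mathcal{A}_\F^2=0$ from \cite[Theorem 1.1]{lynd2017weights}) we are working with the trivial family $\alpha=0$, each conjecture specializes considerably, and the key point is that for $\alpha=0$ several of the obstruction-type hypotheses appearing in these statements are automatically satisfied.

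The main engine is the implication, stated in the excerpt, that Alperin's Weight Conjecture forces $\k(\D,\alpha)=\m(\D,\alpha)$. My approach is to turn this around: assuming the conclusion $\k(\D,0)=\m(\D,0)$ as a hypothesis, I would check that this is exactly the numerical input required by each of the five conjectures in the trivial-family case. Concretely, I would proceed conjecture by conjecture. For the counting conjectures (the analogues of Alperin's Weight Conjecture and its block-free reformulation), the equality $\k(\D,0)=\m(\D,0)$ is essentially the statement itself, once one unwinds that $\m(\D,0)$ is computed from the local data tabulated in Table \ref{table:mdod} and the summation of these entries gives the value recorded in the last row. For the remaining conjectures I would verify that they follow formally from the counting equality together with the explicit local structure of $\D$ determined in \cite{lynd2017weights}, using that the centric radical subgroups and their outer automorphism groups are completely listed there.

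The step I expect to be the genuine obstacle is checking that the defect-by-defect data, not merely the total, is consistent with whichever of these conjectures is stated in a graded form. Theorem \ref{t:main} gives each $\m(\D,0,d)$ as an explicit polynomial in $x$, so I would need to confirm that the corresponding refinement of $\k(\D,0)$ matches these polynomials degree-by-degree rather than only in aggregate; this requires that the hypothesis $\k(\D,0)=\m(\D,0)$ be compatible with the defect grading, which I would justify by appealing to the block-theoretic interpretation of $\k_d$ and $\m(\D,0,d)$ and, in the case $\D=\H$, to Theorem \ref{t:owcforspin}, which already matches the graded local counts with the global character counts for the principal $2$-block of $\Spin_7(q)$. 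For $\D=\F$, where there is no block, I would rely instead on the internal consistency of the fusion-theoretic invariants and the fact, emphasized after Theorem \ref{t:main}, that specializing the generic ($l>0$) polynomials to $l=0$ reproduces the correct values, so that no separate verification is needed in the degenerate case.

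Finally I would assemble these checks into a short case analysis keyed to the numbering in \cite[Section 2]{kessar2018weight}, in each case citing the relevant lemma or proposition there that reduces the conjecture to the equality $\k(\D,0)=\m(\D,0)$ under the hypothesis that $\alpha=0$ is the only compatible family, and pointing to Table \ref{table:mdod} for the explicit values. The proof is therefore expected to be essentially a bookkeeping argument: the substantive content lies in Theorem \ref{t:main}, and Theorem \ref{t:conjectures} harvests it.
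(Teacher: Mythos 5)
Your overall plan -- unwind the five conjectures of \cite[Section 2]{kessar2018weight} into numerical statements about the invariants and check them against Table \ref{table:mdod} -- is the same route the paper takes, but your execution has genuine gaps. First, your claim that for the ``counting conjectures'' the hypothesis $\k(\D,0)=\m(\D,0)$ ``is essentially the statement itself'' is not correct: none of Conjectures 2.1, 2.5, 2.8, 2.9, 2.10 of \cite{kessar2018weight} is the equality $\k=\m$; that equality is a consequence of Alperin's Weight Conjecture (one of the main results of that paper), which is precisely why it is taken as a \emph{hypothesis} here rather than something to be verified. What the five conjectures actually amount to, once $\k(\D,0)$ is replaced by $\m(\D,0)$, are six concrete numerical conditions: $\k(\D,0)\le |S|$; $\m(\D,0,d)\ge 0$ for every $d$; $\m(\D,0,d)\neq 0$ for some $d\neq v_2(|S|)$; equality of the minimal gaps $\min_{r>0}\{r:\Irr^{d-r}(S)\neq 0\}$ and $\min_{r>0}\{r:\m(\D,0,d-r)\neq 0\}$ at $d=v_2(|S|)$; and the two quotient bounds, that $\k(\D,0)/\m(\D,0,d)$ is at most the number of conjugacy classes of $[S,S]$ and $\k(\D,0)/\w(\D,0)$ is at most the number of conjugacy classes of $S$. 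The last two require inputs your proposal never identifies: the class-counting polynomials $4x^3+15x^2+24x+18$ for $S$ and $16x^3+12x$ for $[S,S]$ (Proposition \ref{p:conjnumber}), and the fact that $\w(\D,0)=12$ from \cite[Theorem 1.1]{lynd2017weights}. Saying the remaining conjectures ``follow formally from the counting equality together with the local structure'' is not a substitute for these explicit polynomial comparisons.

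Second, the step you single out as the genuine obstacle -- matching a defect-graded refinement of $\k(\D,0)$ against the polynomials $\m(\D,0,d)$ -- is a misdirection. After the substitution provided by the hypothesis, every quantity appearing in the six conditions above is either one of the $\m(\D,0,d)$ already computed in Theorem \ref{t:main}, a conjugacy-class count, or $\w(\D,0)$; no graded version of $\k$ enters any of the five conjectures, so there is nothing of the sort to verify and Theorem \ref{t:owcforspin} is not needed. Worse, your fallback for $\D=\F$, ``the internal consistency of the fusion-theoretic invariants,'' is not an argument: since $\F$ is exotic there is no block whose defect-$d$ character counts one could compare against, which is exactly why the paper's proof works purely with the local invariants. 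Strip out this detour, insert the missing inputs above, and the proof becomes the bookkeeping argument you describe.
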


Since this work was first made publicly available in June 2019, the author, in joint work with Radha Kessar and Gunter Malle, has greatly developed the connection exposed here between $p$-compact groups and spetses (see \cite{KMS20}). In light of this, Theorem  \ref{t:exowc} implies the conclusion of the Ordinary Weight Conjecture \cite[Conjecture 1]{KMS20} holds for the principal $2$-block of the $\mathbb{Z}_2$-spets associated to $G_{24}$ when $d \notin \{4,l+7\}$.

\subsection*{Structure of the paper} 
For $\D \in \{\H,\F\}$ in Section \ref{s:centrads} we recall the classification of $\mathcal{D}$-centric radical subgroups from \cite{lynd2017weights},  emphasising the provision of computable descriptions of these groups. Using this classification, in Section \ref{s:charsofdcr} we invoke elementary character theory to prove that when $l >0$ the quantity $\w_P(\D,0,d)$ is a rational polynomial in $x$ for each $d \ge 0$ (Corollary \ref{c:rat}). We explicitly calculate these polynomials using a computer (Theorem \ref{t:table4}) and deduce Theorem \ref{t:main}. In Section \ref{s:conjectures} we prove Theorems \ref{t:owcforspin} and \ref{t:conjectures}.  In Section \ref{s:conclude} we briefly introduce spetses and $p$-compact groups before proving Theorem \ref{t:exowc}.

\subsection*{Acknowledgements} 

The author would like to thank  Radha Kessar, Jesper Grodal and Gunter Malle for their comments on earlier versions of this manuscript. Thanks also to Frank L\"{u}beck for providing Tables \ref{table:1mod4} and \ref{table:3mod4} and to Jay Taylor for helping me understand them. I am also grateful to David Craven for suggesting the main approach used in the proof of Theorem \ref{t:main}, and to Markus Linckelmann and Justin Lynd  for helpful conversations. Finally I would like to thank the Mathematisches Forschungsinstitut Oberwolfach for its hospitality during the week-long workshop ``Representations of Finite Groups'' in March 2019. It was there that many of the ideas in this paper were conceived. Finally I would like to thank the anonymous referees for their careful reading and suggestions which have led to numerous improvements.

\section{Centric radical subgroups and their automorphisms}\label{s:centrads}
For $\D \in \{\H,\F\}$ a complete classification of the set $\D^{cr}$ of $\D$-centric radical subgroups of $S$ and their automorphism groups is one of the main results in \cite{lynd2017weights}. Here we list these subgroups in the first column of Table \ref{table:wpdod}. Broadly, this classification is understood using a certain group $K$ which contains $S$ as a Sylow $2$-subgroup, representing the `difference' between $\H$ and $\F$ in the sense that $\F=\langle \H,\K \rangle_S$ is the smallest fusion system on $S$ generated by morphisms in $\H$ and $\K=\F_S(K)$. Utilising the precise results contained in \cite[Sections 2 and 3]{lynd2017weights}, in this section we describe the elements of $\D^{cr}$ and their automorphism groups.  In particular, we note the following:

\begin{Prop}\label{p:centrads}
Let $\D \in \{\H,\F\}$ and $P \in \D^{cr}$ be a $\D$-centric radical subgroup. Then either $\Out_\D(P) \le \Out_\K(P)$ (with index at most $3$) or else $P \in \{R_{1^7},R'_{1^7},R_{1^52},C_S(E/Z),A,C_S(E)\}$.
\end{Prop}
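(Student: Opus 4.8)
The plan is to prove Proposition \ref{p:centrads} by direct inspection of the classification of $\D$-centric radical subgroups given in \cite[Sections 2 and 3]{lynd2017weights}, which (by the convention emphasised above) we import wholesale together with its notation. The statement is a dichotomy: for each $P \in \D^{cr}$ either the outer automorphism group $\Out_\D(P)$ is already visible inside $\Out_\K(P)$ with index dividing $3$, or $P$ lies on the short exceptional list $\{R_{1^7},R'_{1^7},R_{1^52},C_S(E/Z),A,C_S(E)\}$. Since both $\D^{cr}$ (for $\D \in \{\H,\F\}$) and the relevant automorphism groups are finite and explicitly tabulated in \cite{lynd2017weights}, the natural strategy is a finite case check: run through the list of centric radical subgroups $P$, and for each one compare $\Out_\D(P)$ against $\Out_\K(P)$.

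First I would recall that $\K = \F_S(K)$ and that $\F = \langle \H, \K \rangle_S$, so that $\H$, $\K$ and $\F$ all share the same underlying $2$-group $S$ and their morphism sets are nested in a controlled way. The key structural input is that $\K$ encodes precisely the ``difference'' between $\H$ and $\F$. Consequently, for a subgroup $P$ that is radical in both systems, any $\D$-automorphism that is \emph{not} already a $\K$-automorphism must come from the extra generating morphisms, and these are few and localised. For each $P \in \D^{cr}$ I would therefore extract from the tables of \cite{lynd2017weights} the explicit isomorphism type of $\Out_\D(P)$ and of $\Out_\K(P)$, and verify the containment $\Out_\D(P) \le \Out_\K(P)$ together with the index bound. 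The index $3$ bound is natural here because the complex reflection group $G_{24} \cong \GL_3(2) \times C_2$ and the associated local automizers repeatedly involve subgroups of $\GL_3(2)$ whose order is divisible by $3$ (e.g. Frobenius complements or Borel-type subgroups), so the ``extra'' automorphisms contributed beyond $\K$ are typically a cyclic group of order $3$ or trivial.

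The main obstacle is identifying exactly which subgroups must be placed on the exceptional list, and this is where care is required rather than cleverness. The exceptional subgroups are those where $\Out_\D(P)$ is genuinely \emph{not} contained in $\Out_\K(P)$, or where the index fails to be at most $3$; these are precisely the subgroups whose automizers in $\F$ (respectively $\H$) pick up the full exotic structure coming from the Benson--Solomon/$\DI(4)$ geometry rather than from the classical group $K$. For the named subgroups $R_{1^7}, R'_{1^7}, R_{1^52}, C_S(E/Z), A, C_S(E)$ I would simply confirm, using the automorphism data in \cite{lynd2017weights}, that they fail the containment-or-index-$3$ condition, so that listing them explicitly is both necessary and sufficient. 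Conversely, for every remaining $P \in \D^{cr}$ I would confirm that the condition $\Out_\D(P) \le \Out_\K(P)$ with index at most $3$ does hold. Since the computation is entirely finite and mechanical once the tables are in hand, the substance of the proof is organisational: setting up the comparison uniformly across $\D \in \{\H,\F\}$ and ensuring no centric radical subgroup is overlooked. I would present this as a table-driven verification, flagging the six exceptional subgroups and then asserting the uniform containment for the complement.
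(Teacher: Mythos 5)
Your approach coincides with the paper's: the proof there is simply a citation to the classification tables of \cite{lynd2017weights} (Tables 1, 3 and 4), i.e.\ exactly the table-driven, case-by-case comparison of $\Out_\D(P)$ with $\Out_\K(P)$ that you describe. Your additional commentary on why the index bound is $3$ is plausible but inessential; the verification is, as you say, a finite mechanical check against the tabulated automizer data.
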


We begin by considering those $P \in \D^{cr}$ for which $\Out_\D(P) \le \Out_\K(P)$. We work with an explicit $6$-dimensional representation of the group $K$ which we now describe. Under the natural inclusion of $\SL_2(q)$ into $\SL_2(q^2)$, let $N:=N_{\SL_2(q^2)}(\SL_2(q))$ be its normalizer. Form the wreath product $W:=N \wr S_3$ and let $N_0:=N_1 \times N_2 \times N_3$ ($N_i$ isomorphic with $N$) and $X=S_3=\langle \tau,\gamma \rangle $ be the base and acting group, where $\tau$ and $\gamma$ act like $(1,2)$ and $(2,3)$ respectively. The natural representations $\rho_i: N_i \hookrightarrow \SL_2(q^2)$ induce a representation $\rho$ of $W$ given by $$N_0 \mapsto \left(\begin{matrix} 
\rho_1(N_1) & 0 & 0 \\
0 & \rho_2(N_2) & 0 \\
0 & 0 & \rho_3(N_3)  \\
\end{matrix}\right), \hspace{2mm} \tau \mapsto \left(\begin{matrix} 
0 & I_2 & 0 \\
I_2 & 0 & 0 \\
0 & 0 & I_2  \\
\end{matrix}\right), \hspace{2mm} \gamma \mapsto \left(\begin{matrix} 
I_2 & 0 & 0 \\
0 & 0 & I_2 \\
0 & I_2 & 0  \\
\end{matrix}\right),$$ where $I_2$ denotes the $2 \times 2$ identity matrix. This leads to a representation of $K:=\hat{K}/Z(\hat{K})$, where $\hat{K}:=O^2(N_0)C_{N_0}(X)X \le W$ and $Z(\hat{K})=\langle (-1,-1,-1)\rangle \le N$. 
Now $O^2(N_0)=\hat{L}_1 \times \hat{L}_2 \times \hat{L}_3$ is a direct product of three copies of $\SL_2(q)$ permuted transitively by $X$. Write $L_i$ for the image in $K$ of $\hat{L}_i$ and let $R_i \cong Q_{2^{l+3}}$ be a Sylow $2$-subgroup of $L_i$ for $i=1,2,3$. As in \cite[Notation 2.12]{lynd2017weights} when $l>0$ there are $R_i$-conjugacy classes of subgroups isomorphic to $Q_8$ with representatives $Q_i$ and $Q_i'$ chosen so that $X$ transitively permutes $\{Q_1,Q_2,Q_3\}$ and $\{Q'_1,Q'_2,Q'_3\}$. These classes are fused by an element $\textbf{c}$ whose preimage (in $\hat{K}$) lies in $C_{N_0}(X)$. We may multiply $\textbf{c}$ on the left by a certain diagonal element in $R_1R_2R_3$ to produce an element $\textbf{d}$ which commutes with $\tau$, so that $S=R_1R_2R_3\langle \textbf{d},\tau \rangle$. We also set $\tau':=\textbf{d}\tau$  (see \cite[Notation 2.12(f),(g)]{lynd2017weights}.) For $\D\in \{\H,\F\},$ the above discussion describes the $\D$-centric radical subgroups listed in the first $14$ and $11$ rows of \cite[Tables 3 and 4]{lynd2017weights} respectively (note that the group $C_S(U) \in \F^{cr}$ admits the alternative description $R_1R_2R_3\langle \textbf{d} \rangle$). 

We now turn to the remaining groups in Proposition \ref{p:centrads}. We fix an embedding $G_{24} \hookrightarrow \GL_3(\mathbb{Z}_2)$ of the $\mathbb{Z}_2$-reflection group $G_{24}$ from which we obtain representations $$\varphi_k:G_{24} \hookrightarrow \GL_3(\mathbb{Z}/2^k), \hspace{3mm} \mbox{ for each } k \ge 2$$  (see  \cite[Theorem 4.1]{DwyerWilkerson1993}). We use this family of representations to identify $S$ with a Sylow $2$-subgroup of $T \rtimes G_{24}$ where $T$ is the torus of $\mathcal{F}$: a homocyclic subgroup of rank $3$ and exponent $l+2$ (see \cite[Section 2.5]{lynd2017weights}). As in \cite[Section 2.6]{lynd2017weights}, set $Z:=Z(S)$, $U:=Z(L_1L_2L_3)$,  $E:=\Omega_1(T)$ and $A=T\langle \textbf{d} \rangle$. Then $Z < U < E < A$ is a chain of elementary abelian subgroups. The groups $C_S(E)=T\langle \textbf{d}\rangle$ and $C_S(E/Z)$ both contain $T$ and are $\F$-centric radical; the latter group is in addition $\H$-centric radical. Note that $C_S(U)/C_S(E)$ and $C_S(E/Z)/C_S(E)$ are unipotent radical subgroups in  $O^2(G_{24}) \cong \GL_3(2) \cong \Out_\F(C_S(E))$.
We also have $\Out_\F(A) \cong \GL_4(2)$, constructed from the unique non-split extension of $\GL_4(2)$ by $\mathbb{F}_2^4$ (see \cite[Theorem 9.15]{CravenTheory}.)  The remaining elements of $\D^{cr}$ are described in \cite[Proposition 3.2]{lynd2017weights} and labelled $P=R_{1^7}$, $P=R'_{1^7}$ and $P=R_{1^52}$. The first two of these have orders $2^7$ and $2^6$ respectively and can be constructed inside $\Spin_7(3)$ or $\Spin_7(9)$. The group $P=R_{1^52}$ is isomorphic to $(Y_1 \times Y_2 \times Y_3)/Z$ where $Y_1=Q_8, Y_2= D_8$, $Y_3= Q_{2^{l+3}}$ and $Z:=\langle (z_1,z_2,1),(z_1,1,z_3)\rangle$ with $\langle z_i \rangle=Z(Y_i)$ for $i=1,2,3$. Hence $P$ is a central product $2_{-}^{1+4} * Q_{2^{l+3}}$, and  $\Out_\D(P) = \Out(2_{-}^{1+4})= GO_4^-(2) \cong S_5$ acts trivially on the image of $Y_3$ in $P$. 

Finally, to each $P \in \D^{cr}$ we associate an integer $0 \le a \le 3$ called its \textit{type} listed in the third column of Table \ref{table:wpdod}. In all cases we have $v_2(|P|)=al+b$ for some integer $b \le 10$, and thus the possible defects of characters of $P$ are understood in terms of $a$.

\section{Characters of centric radical subgroups}\label{s:charsofdcr}
One useful observation is that the elements of $\D^{cr}$ satisfying $\Out_\D(P) \le \Out_\K(P)$ are groups containing a normal subgroup isomorphic to a central product of three generalised quaternion groups of index at most $4$. The characters of such groups have an elementary description by virtue of the following result. If $G_0 \le G$ are groups, we let $\Aut(G,G_0)$ denote the set of automorphisms of $G$ which fix $G_0$. 

\begin{Lem}[Method of little groups]\label{l:cliff}
Let $G$ be a finite group and $G_0$ be a normal subgroup of $G$, and define $\Theta:=[\Irr(G_0)/G]$ to be set of orbit representatives for the action of $G$ on $\Irr(G_0)$. Suppose that each $\theta \in \Irr(G_0)$ extends to a character $\hat{\theta} \in \Irr(I_G(\theta))$ (so $\hat{\theta}|_{G_0}=\theta$) where $I_G(\theta)$ denotes the inertia subgroup. The following hold:

\begin{itemize}
\item[(1)] There is a bijection $$\Phi: \{(\theta, \beta) \mid \theta \in \Theta \mbox{ and } \beta \in \Irr(I_G(\theta)/G_0)\} \longrightarrow \Irr(G)$$  which sends a pair $(\theta,\beta)$ to the character $(\beta \widehat{\theta})^G$, where $\widehat{\theta} \in \Irr(I_G(\theta))$ is some (any) character which extends $\theta$. 
\item[(2)] $\Phi$ is $\Aut(G,G_0)$-equivariant in the sense that for each pair $(\theta, \beta)$ as above and element $\alpha \in \Aut(G,G_0)$ the following hold:
\begin{itemize}
\item[(i)] $I_G(\theta)\alpha=I_G(\theta^\alpha)$;
\item[(ii)]  there exists an extension $\widehat{\theta^\alpha}:=\widehat{\theta}^\alpha \in \Irr(I_G(\theta^\alpha))$ of $\theta^\alpha$; and
\item[(iii)] we have, $\Phi((\theta^\alpha,\beta^\alpha))=\Phi(\theta,\beta)^\alpha.$
\end{itemize}
 \end{itemize}
\end{Lem}

\begin{proof}
(1) is proven in \cite[Theorem 4.2]{lynd2017weights} and follows from \cite[11.5]{CurtisReiner1990}. If $(\theta,\beta)$ and $\alpha$ are as in (2), then for each $g \in I_G(\theta)$ we have $\theta^{\alpha \circ c_{g\alpha}}=\theta^{c_g \circ \alpha}=\theta^\alpha$ from which we conclude that $g\alpha \in I_G(\theta^\alpha)$ and (i) holds. This shows that $\hat{\theta}^\alpha \in \Irr_G(I_G(\theta^\alpha))$ extends $\theta^\alpha$ and (ii) also holds. Finally, if $T = [G/I_G(\theta)]$ is a transversal then for each $g \in G$, $$(\hat{\theta}\beta)^G(g) \sum_{t \in T} \hat{\theta}((t\alpha)^{-1} g\alpha t \alpha)\beta((t\alpha)^{-1} g\alpha t \alpha) = \sum_{t \in T\alpha} \hat{\theta}(t^{-1} g\alpha t)\beta(t^{-1} g\alpha t) = (\hat{\theta}^\alpha \beta^\alpha)^G(g), $$ since $T\alpha$ is a transversal for $I_G(\theta^\alpha)$ in $G$ by (i), proving (iii). 
\end{proof}

When applying Lemma \ref{l:cliff} we will require a precise description of the irreducible characters of a generalized quaternion group. 

%\begin{Lem}\label{l:chars}
%Let $P$ be a non-abelian $p$-group with an abelian subgroup $P_0$ of index $p$. Let $Z_0$ be a subgroup of order $p$ in $P' \cap P_0 \cap Z(P)$. Then every irreducible character of $P$ is either:
%\begin{enumerate}
%\item the lift of an irreducible character of $P/Z_0$; or
%\item $\psi^P$ with $\psi \in \Irr(P_0)$ and $Z_0 \not\le
% \ker(\psi)$ 
% \end{enumerate}
% Moreover there are $p^{n-2}-p^{n-3}$ characters of the second type, where $|P|=p^n$.
%\end{Lem}

%\begin{proof}
%See \cite[Theorem 26.4]{james2001representations}.
%\end{proof}

\begin{Lem}\label{l:genquat}
Let $R:=\langle a,b \mid a^{2^{l+2}}=b^4=1, a^{2^{l+1}}=b^2, b^{-1}ab=a^{-1} \rangle$ be a generalized quaternion group of order $2^{l+3}$ and let $\omega \in \mathbb{C}$ be a primitive root of unity of order $2^{l+2}$. The following hold:
\begin{itemize}
\item[(1)] $R$ has $2^{l+1}+3$ conjugacy classes with a complete set of representatives given by $\{b,ab\} \cup \{a^i \mid i=0,1,\ldots, 2^{l+1}\}$.
\item[(2)] Every non-linear irreducible character of $R$ is of degree $2$ and given by $$\psi_{u,t}(g):=\begin{cases} 0 & \mbox{ if $g \notin \langle a \rangle$ } \\ \omega^{si} + \omega^{-si} & \mbox{ if $g=a^i$, some $0 \le i \le 2^{l+2}-1$.} \end{cases}$$ where $s=2^{l-u}t$ for some $0 \le u \le l$ and $t=1,3,\ldots,2^{u+1}-1$.
\item[(3)] $R$ has four linear characters.
\end{itemize}
In particular, $|\{\chi \in \Irr(R) \mid Z(R) \nleq \ker(\chi)\}|=2^l$.
\end{Lem}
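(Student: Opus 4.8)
The plan is to apply Lemma \ref{l:chars} with $P = R$, the abelian index-$2$ subgroup $P_0 = \langle a\rangle \cong C_{2^{l+2}}$, and $Z_0 = Z(R)$. First I would record the basic structure. The relation $b^{-1}ab = a^{-1}$ gives $R' = \langle a^2\rangle$ of order $2^{l+1}$, while $Z(R) = \langle a^{2^{l+1}}\rangle = \langle b^2\rangle$ has order $2$; since $a^{2^{l+1}} = (a^2)^{2^l}$, we have $P' \cap P_0 \cap Z(R) = Z(R)$, so $Z_0 = Z(R)$ is the correct choice in Lemma \ref{l:chars}, and $|R| = 2^{l+3}$ gives $n = l+3$ there.

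For part (1) I would compute the conjugacy classes directly from $a^b = a^{-1}$. Inside $\langle a\rangle$ this forces $a^i \sim a^{-i}$, giving the singletons $\{1\}$, $\{a^{2^{l+1}}\}$ together with the pairs $\{a^i,a^{-i}\}$ for $1 \le i \le 2^{l+1}-1$, i.e.\@ $2^{l+1}+1$ classes with representatives $a^0,\dots,a^{2^{l+1}}$. For the coset $\langle a\rangle b$, conjugation by $a^j$ sends $a^i b$ to $a^{i-2j}b$ and conjugation by $b$ sends it to $a^{-i}b$, so the class of $a^i b$ is $\{a^k b \mid k \equiv i \ (\mathrm{mod}\ 2)\}$; this yields the two further classes represented by $b$ and $ab$, for $2^{l+1}+3$ classes in total.

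For part (2) the degree claim splits along the dichotomy of Lemma \ref{l:chars}: every type-(2) character is $\psi^R$ for a linear $\psi \in \Irr(\langle a\rangle)$, hence of degree $[R:\langle a\rangle] = 2$, while every type-(1) character is inflated from $R/Z(R)$, a dihedral group of order $2^{l+2}$ whose non-linear irreducibles all have degree $2$. Thus every non-linear irreducible character of $R$ has degree $2$, and since $\langle a\rangle$ is abelian of index $2$, Clifford theory shows each is induced from a linear character $\chi_k \colon a \mapsto \omega^k$ of $\langle a\rangle$; here $\chi_k^R$ is irreducible exactly when $\chi_k \ne \chi_k^b = \chi_{-k}$, i.e.\@ $k \not\equiv 0,2^{l+1} \ (\mathrm{mod}\ 2^{l+2})$, with $\chi_k^R = \chi_{-k}^R$. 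So the non-linear characters are parametrised by the pairs $\{k,-k\}$ with $k \in \{1,\dots,2^{l+1}-1\}$, of which there are $2^{l+1}-1$; the induction formula gives $\chi_k^R(a^i) = \omega^{ki}+\omega^{-ki}$ and $\chi_k^R \equiv 0$ off $\langle a\rangle$. Writing $k = 2^{l-u}t$ with $u = l - v_2(k)$ and $t$ odd reindexes these by $0 \le u \le l$ and $t \in \{1,3,\dots,2^{u+1}-1\}$ (so that $v_2(k) = l-u$ and $k < 2^{l+1}$), recovering exactly the stated $\psi_{u,t}$, and $\sum_{u=0}^{l}2^u = 2^{l+1}-1$ confirms completeness.

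Finally, the ``in particular'' clause falls out of the same dichotomy: a character has $Z(R) = Z_0 \nleq \ker$ precisely when it is of type (2), since type-(1) characters are inflated from $R/Z_0$ and the four linear characters contain $R' \supseteq Z_0$ in their kernels, whereas for a type-(2) character $\psi^R$ and the central generator $z$ one computes $\psi^R(z) = 2\psi(z) = -2$ as $Z_0 \nleq \ker\psi$. Lemma \ref{l:chars} counts the type-(2) characters as $2^{n-2} - 2^{n-3} = 2^{l+1} - 2^l = 2^l$, giving the claim. I expect the only genuine bookkeeping obstacle to be the reindexing $k \leftrightarrow (u,t)$ and checking that the valuation stratification matches the type-(1)/type-(2) split (type (2) being exactly $u = l$, i.e.\@ $k$ odd); everything else is a direct computation.
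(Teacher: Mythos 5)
Your proof is correct, but it takes a genuinely more direct route than the paper's. The paper disposes of (1) by citing \cite{lynd2017weights}, and proves (2) by induction on $l$: Lemma \ref{l:chars} splits $\Irr(R)$ into lifts from $R/Z(R)$ and characters induced from $\langle a\rangle$ whose kernel misses $Z(R)$; the latter are computed explicitly (they are precisely the $\psi_{l,t}$ with $t$ odd, $2^l$ in number), and the lifted characters are handled by the inductive hypothesis applied to the quotient. You instead prove (1) by a direct conjugacy computation and classify all non-linear irreducibles in one stroke: each is induced from a linear character $\chi_k$ of the abelian index-$2$ subgroup $\langle a\rangle$ (Clifford theory plus Frobenius reciprocity), with $\chi_k^R = \chi_{-k}^R$ irreducible exactly when $k \not\equiv 0, 2^{l+1} \pmod{2^{l+2}}$, and the reindexing $k = 2^{l-u}t$ recovers the $\psi_{u,t}$; Lemma \ref{l:chars} enters only for the final count $2^{n-2}-2^{n-3}=2^l$. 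Your approach buys two things: it is self-contained, and it sidesteps a wrinkle in the paper's induction --- the quotient $R/Z(R)$ is in fact dihedral (Klein four when $l=0$), not generalized quaternion as the paper's proof asserts, so the inductive step as literally written would need a parallel statement for dihedral groups, whereas your argument never passes to the quotient. The paper's argument, in turn, is shorter on the page and leans on the familiar character theory of $Q_8$ as its base case.
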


\begin{proof}
Part (1) is proved in \cite{lynd2017weights} and part (3) is immediate.  By \cite[Theorem 26.4]{james2001representations}, every character is either the lift of an irreducible character of $R/Z(R)$ or of the form $\psi^R$ where $\psi \in \Irr(\langle a \rangle)$ and $Z(R) \not \le \ker(\psi)$. Since $R/Z(R)$ is a dihedral group, and characters of $Q_8$ are well-known we can calculate the degree 2 characters inductively. Every character of $\langle a \rangle$ is given by $\varphi_t(a^i)=\omega^{ti}$ for some $0 \le t \le 2^{l+2}-1$, and one easily shows that:
\begin{itemize}
\item[(a)] $Z(R) \not \le \ker(\varphi_t) \Longleftrightarrow t \equiv 1 \mod 2$;
\item[(b)] $\varphi_t^ R = \varphi_s^ R  \Longleftrightarrow s+t \equiv 0 \mod 2^{l+2}$.
\end{itemize}
Thus, apart from the characters obtained by lifting $R/Z(R)$ we obtain $2^{l+1}-2^l=2^l$ distinct characters by inducing from $\langle a \rangle$ given by
$$\psi_{l,t}(g):=\begin{cases} 0 & \mbox{ if $g \notin \langle a \rangle$; } \\ \omega^{ti} + \omega^{-ti} & \mbox{ if $g=a^i$, some $0 \le i \le 2^{l+2}-1$,} \end{cases}$$ 
for each $t=1,3,\ldots,2^{l+1}-1$. (2) now follows by induction.
\end{proof}

The following example is illustrative of our general approach to calculating the integers $\w_P(\D,0,d)$. It is also included to emphasise that many of the computer computations in this paper replace lengthy and unenlightening by-hand calculations. Recall from the introduction that $x=2^l$.

\begin{example}\label{ex:csu}
Let $U$ be as in Section \ref{s:centrads}, so that $C_S(U) \in \F^{cr}$. For all $l > 0$ and $P=C_S(U)$ we have $$\w_P(\F,0,3l+6)=-\frac{1}{3}(8x^3-6x^2-8x+9).$$
\end{example}

\begin{proof}
By \cite[Notation 3.3]{lynd2017weights}, $P=(R_1 \times R_2 \times R_3)\langle \mathbf{c} \rangle/Z$ where $R_i \cong Q_{2^{l+3}}$ and $Z=\langle z_1z_2z_3\rangle$ with $\langle z_i \rangle = Z(R_i)$ for each $i$. Recall from Section \ref{s:centrads} that $\mathbf{c}$ interchanges the the two classes of subgroups isomorphic with $Q_8$ in each component and that $\Out_\F(P) \cong S_3$ transitively permutes the $R_i$. 

Note in particular that $v_2(|P|)=3l+9$, and since $d=3l+6$ we must consider characters of degree $8$.  Since every irreducible character $\theta$ of $R_1R_2R_3/Z$ extends to its inertia group $I_P(\theta)$, by Lemma \ref{l:cliff} for each $\chi \in \Irr^d(P)$, either:
\begin{itemize}
\item[(a)] $\chi = (\theta_1 \otimes \theta_2 \otimes \theta_3)^P$, for some $\theta_1 \otimes \theta_2 \otimes \theta_3 \in \Irr(R_1 \times R_2 \times R_3)$ of degree $4$ which is not fixed by $\mathbf{c}$ and whose kernel contains $Z$; or
\item[(b)] $\chi$ is one of two constituents of  $(\theta_1 \otimes \theta_2 \otimes \theta_3)^P$ for some $\theta_1 \otimes \theta_2 \otimes \theta_3 \in \Irr(R_1 \times R_2 \times R_3)$ of degree $8$ which is fixed by $\mathbf{c}$ and whose kernel contains $Z$.
\end{itemize}
From Lemma \ref{l:genquat} we see that $\mathbf{c}$ fixes all degree $2$ characters of the $R_i$ and interchanges two of the four linear characters in each component. Write $\Irr^d_{(a)}(P)$ and $\Irr^d_{(b)}(P)$ for the sets of characters in cases (a) and (b) respectively. By Lemma \ref{l:cliff}(2) the action of $\Out_\F(P) \cong S_3$ on $\Irr^{3l+6}(P)$ is described by permutation of the tensor factors in the corresponding character of $R_1 \times R_2 \times R_3$, and this action preserves the decomposition $\Irr^d(P)=\Irr^d_{(a)}(P)\sqcup \Irr^d_{(b)}(P)$ described above. Let $[c_\tau] \in \Out_\F(P)$ be the class of $c_\tau \in \Aut_\F(P)$ which acts by the permutation $(1,2)$ and set $\sigma_1:= (1)$ and $\sigma_2:=(1 < [c_\tau])$. Thus $\{\sigma_1,\sigma_2 \}$ is a complete set of $\Out_\F(P)$-representatives for elements of $\N_P$. We split the sum

$$\begin{array}{rcl} \w_P(\F,0,d) &=& \displaystyle\sum_{\sigma \in \N_P/\Out_\F(P)} (-1)^{|\sigma|} 
\sum_{\chi \in \Irr^d(P)/I(\sigma)} z(kI(\sigma,\chi)) \\ 
&=& \displaystyle\sum_{\chi \in \Irr^d(P)/I(\sigma_1)} z(kI(\sigma_1,\chi)) - \sum_{\chi \in \Irr^d(P)/I(\sigma_2)} z(kI(\sigma_2,\chi)), \\
\end{array}$$
and then for each $i=1,2$ we have $$\begin{array}{rcl} \displaystyle\sum_{\chi \in \Irr^d(P)/I(\sigma_i)} z(kI(\sigma_i,\chi))  &=& \displaystyle\sum_{\chi \in \Irr_{(a)}^d(P)/I(\sigma_i)} z(kI(\sigma_i,\chi)) + \displaystyle\sum_{\chi \in \Irr_{(b)}^d(P)/I(\sigma_i)} z(kI(\sigma_i,\chi)), \\
\end{array}$$
which gives a total of four sums to compute. The following notation will be helpful when enumerating characters whose kernel contains $Z$: 
$$\begin{array}{rcl}
\Irr_0(R_i) &:=& \{\mu \in \Irr(R_i), \mu(1)=2, Z(R_i) \le \ker(\mu)\}; \mbox{ and } \\ \Irr_1(R_i)&:=&\{\nu \in \Irr(R_i), \nu(1)=2, Z(R_i) \nleq \ker(\nu)\}.\end{array}$$ 
for each $1 \le i \le 3$. By Lemma \ref{l:genquat}, $|\Irr_0(R_i)|=x-1$ and $|\Irr_0(R_i)|=x$.
\newline \newline 
\underline{Case 1: $\chi \in \Irr_{(a)}^d(P)/I(\sigma_1)$.} Let $\rho$ be a linear character of $R_3$ which is moved by $\mathbf{c}$. In Table \ref{table:casea} we list orbit representatives $\theta \in \Irr(R_1 \times R_2 \times R_3)$ satisfying the conditions of (a) according to their `type', together with the number of such representatives, and their $I(\sigma_1)$-stabilizers. From the table, we obtain a (positive) contribution to $\w_P(\F,0,3l+6)$ of $${x-1 \choose 2} +{x \choose 2}$$ from characters of types 1 and 3. 
\newline \newline \underline{Case 2: $\chi \in \Irr_{(a)}^d(P)/I(\sigma_2)$}. We have $I(\sigma_2)=\langle [c_\tau] \rangle$ and from Table \ref{table:casea2} we see that the negative contribution to $\w_P(\F,0,3l+6)$ in this case is  $${x-1 \choose 2} +{x \choose 2} + (x-1)^2+x^2.$$ from characters of types 1, 3, 5   and 6.
\newline
\underline{Case 3: $\chi \in \Irr_{(b)}^d(P)/I(\sigma_2)$}. Table \ref{table:caseb} lists the possible orbit representatives satisfying (b), where we see that a negative contribution to $\w_P(\F,0,3l+6)$ of $$x(x-1)^2+2x^2(x-1)+(x-1)^2(x-2)$$ is provided by representatives of types 1, 3 and 5.
\newline
\newline
\underline{Case 4: $\chi \in \Irr_{(b)}^d(P)/I(\sigma_1)$.} Here we obtain a contribution of $$x(x-1)^2+\frac{(x-1)(x-2)(x-3)}{3}+2(x-1)$$ from characters of types 1, 4 and 5 (see Table \ref{table:caseb2}).

Combining Cases 1 to 4 we have $\w_P(\F,0,3l+6)$ is equal to $$\begin{array}{rcl}  && \displaystyle{- \left((x-1)^2+x^2 \right)+\frac{(x-1)(x-2)(x-3)}{3}+2(x-1) -2x^2(x-1)-(x-1)^2(x-2)}\\ &=& \displaystyle{-\frac{1}{3}(8x^3-6x^2-8x+9),} \end{array}$$ whence the result.

\begin{table}[]
\renewcommand{\arraystretch}{1.4}
\centering
\caption{$I(\sigma_1)$-orbits of characters in case (a) and their stabilizers  }
\label{table:casea}
\begin{tabular}{|c|c|c|c|c|}
\hline
Type & $\theta$                             & Conditions                                  & $\#$              & $\stab_{I(\sigma_1)}(\chi)$ \\ \hline
$1$  & $\mu_1 \otimes \mu_2 \otimes \rho$ & $\mu_i \in \Irr_0(R_i),$ $\mu_1 \neq \mu_2, $ & ${x-1 \choose 2}$ & $1$                       \\ \hline
$2$  & $\mu_1 \otimes \mu_2 \otimes \rho$ & $\mu_i \in \Irr_0(R_i),$ $\mu_1 = \mu_2, $    & $x-1$             & $\langle [c_\tau] \rangle$    \\ \hline
$3$  & $\nu_1 \otimes \nu_2 \otimes \rho$ & $\nu_i \in \Irr_1(R_i),$ $\nu_1 \neq \nu_2, $ & ${x \choose 2}$   & $1$                       \\ \hline
$4$  & $\nu_1 \otimes \nu_2 \otimes \rho$ & $\nu_i \in \Irr_1(R_i),$ $\nu_1 = \nu_2, $    & $x$               & $\langle [c_\tau] \rangle$    \\ \hline
\end{tabular}
\end{table}

\begin{table}[]
\renewcommand{\arraystretch}{1.4}
\centering
\caption{$I(\sigma_2)$-orbits of characters in case (a) and their stabilizers  }
\label{table:casea2}
\begin{tabular}{|c|c|c|c|c|}
\hline
Type & $\theta$                             & Conditions                                  & $\#$              & $\stab_{I(\sigma_1)}(\chi)$ \\ \hline

$1$  & $\mu_1 \otimes \mu_2 \otimes \rho$ & $\mu_i \in \Irr_0(R_i),$ $\mu_1 \neq \mu_2, $ & ${x-1 \choose 2}$ & $1$                       \\ \hline

$2$  & $\mu_1 \otimes \mu_2 \otimes \rho$ & $\mu_i \in \Irr_0(R_i),$ $\mu_1 = \mu_2, $    & $x-1$             & $\langle [c_\tau] \rangle$    \\ \hline

$3$  & $\nu_1 \otimes \nu_2 \otimes \rho$ & $\nu_i \in \Irr_1(R_i),$ $\nu_1 \neq \nu_2, $ & ${x \choose 2}$   & $1$                       \\ \hline

$4$  & $\nu_1 \otimes \nu_2 \otimes \rho$ & $\nu_i \in \Irr_1(R_i),$ $\nu_1 = \nu_2, $    & $x$               & $\langle [c_\tau] \rangle$    \\ \hline

$5$  & $\mu_1 \otimes \rho \otimes \mu_2$ & $\mu_i \in \Irr_0(R_i),$  & $(x-1)^2$ & $1$                       \\ \hline

$6$  & $\nu_1 \otimes \rho \otimes \nu_2$ & $\nu_i \in \Irr_1(R_i),$  & $x^2$ & $1$                       \\ \hline

\end{tabular}
\end{table}

\begin{table}[]
\renewcommand{\arraystretch}{1.4}
\centering
\caption{$I(\sigma_2)$-orbits of characters in case (b) and their stabilizers  }
\label{table:caseb}
\begin{tabular}{|c|c|c|c|c|}
\hline
Type & $\theta$                            & Conditions                                                                                                            & $\#$           & $\stab_{I(\sigma_2)}(\chi)$                             \\ \hline
$1$  & $\nu_1 \otimes \nu_2 \otimes \mu_3$ & $\nu_i \in \Irr_1(R_i),$ $\mu_3 \in \Irr_0(R_3), \nu_1 \neq \nu_2$ & $x(x-1)^2$     & $1$                                                   \\ \hline
$2$  & $\nu_1 \otimes \nu_2 \otimes \mu_3$ & $\nu_i \in \Irr_1(R_i),$ $\mu_3 \in \Irr_0(R_3), \nu_1 = \nu_2$                    & $2x(x-1)$      & $\langle [c_\tau] \rangle$                                \\ \hline
$3$  & $\nu_1 \otimes \mu_2 \otimes \nu_3$ & $\nu_i \in \Irr_1(R_i),$ $\mu_2 \in \Irr_0(R_2)$                                                                      & $2x^2(x-1)$    & $1$                                                   \\ \hline
$4$  & $\mu_1 \otimes \mu_2 \otimes \mu_3$ & $\mu_i \in \Irr_0(R_i)$, $\mu_1=\mu_2$                                                                                & $2(x-1)^2$     & $\langle [c_\tau] \rangle$                                \\ \hline
$5$  & $\mu_1 \otimes \mu_2 \otimes \mu_3$ & $\mu_i \in \Irr_0(R_i)$, $\mu_1 \neq \mu_2$                                                                           & $(x-1)^2(x-2)$ & $1$                                                   \\ \hline
\end{tabular}
\end{table}

\begin{table}[]
\renewcommand{\arraystretch}{1.4}
\centering
\caption{$I(\sigma_1)$-orbits of characters in case (b) and their stabilizers  }
\label{table:caseb2}
\begin{tabular}{|c|c|c|c|c|}
\hline
Type & $\theta$                            & Conditions                                                                                                            & $\#$           & $\stab_{I(\sigma_2)}(\chi)$                             \\ \hline
$1$  & $\nu_1 \otimes \nu_2 \otimes \mu_3$ & $\nu_i \in \Irr_1(R_i),$ $\mu_3 \in \Irr_0(R_3), \nu_1 \neq \nu_2$ & $x(x-1)^2$     & $1$                                                   \\ \hline

$2$  & $\nu_1 \otimes \nu_2 \otimes \mu_3$ & $\nu_i \in \Irr_1(R_i),$ $\mu_3 \in \Irr_0(R_3), \nu_1 = \nu_2$                    & $2x(x-1)$      & $\langle [c_\tau] \rangle$                                \\ \hline

$3$  & $\mu_1 \otimes \mu_2 \otimes \mu_3$ & $\mu_i \in \Irr_0(R_i)$, $\mu_1=\mu_2$                                                                                & $2(x-1)^2$     & $\langle [c_\tau] \rangle$                                \\ \hline

$4$  & $\mu_1 \otimes \mu_2 \otimes \mu_3$ & $\mu_i \in \Irr_0(R_i)$, $\mu_1 \neq \mu_2 \neq \mu_3 \neq \mu_1$                                                                           & ${x-1 \choose 3}/3$ & $1$                                                   \\ \hline

$5$  & $\mu_1 \otimes \mu_2 \otimes \mu_3$ & $\mu_i \in \Irr_0(R_i)$, $\mu_1 = \mu_2 = \mu_3$                                                                           & $2(x-1)$ & $I(\sigma_1)$                                                   \\ \hline

\end{tabular}
\end{table}

\end{proof}

Our general strategy is to reduce the computation of $\w_P(\D,0,d)$ to a few small values of $l$ by proving that  it is a polynomial in $x$. This is achieved in Corollary \ref{c:rat} below. We are most reliant on this approach in cases where we do not have an explicit description of the action of $\Out_\D(P)$ on $\Irr(P)$ such as when $\D=\F$, $P=C_S(E) \in \F^{cr}$ and $\Out_\D(P)=\GL_3(2)$. Here, the following lemma is relevant:

\begin{Lem}\label{l:gl32action}
For $k \ge 2$ let $V=(\mathbb{Z}/2^k)^3$ and $\varphi_k: G_{24} \rightarrow  \GL_3(V) \cong \GL_3(\mathbb{Z}/2^k)$ be the representations constructed in Section \ref{s:centrads}. For each subgroup $W \le \varphi_k(G_{24})$, let $\mathcal{O}$ be the set of orbits for the action of $W$ on $V$ (or $V^*$). For each $W_0 \le W$, the integer $|\{\alpha \in \mathcal{O} \mid \stab_W(\alpha)=W_0\}|$ can be expressed as a rational polynomial in $2^k$ of degree at most $3$.
\end{Lem}

\begin{proof}
Let $V, W$ and $\mathcal{O}$ be as in the lemma. Following \cite{orlik1982arrangements}, let $\mathcal{A}$ denote the set of $1$-eigenspaces of reflections in $V$ and denote by $L=L(\mathcal{A})$ the set of all intersections of elements of $\mathcal{A}$, regarded as a finite poset with minimal element $V$. If $X \in L$ let $L_X$ be the subposet $\{Y \in L \mid X \le Y\}$ with minimal element $X$, and let $$\chi(L_X,x):= \sum_{\substack{Y \in L \\ Y \ge X}} \mu(X,Y)x^{\dim(Y)}$$ be its Euler characteristic, where $\mu(X,Y)$ denotes the usual M\"{o}bius function for posets. Recall that $W' \le W$ is a parabolic subgroup if $W'=C_V(Y)$ for some $Y \in L$. By Steinberg's theorem (see \cite[Proposition 2.3]{KMS20}), we have $Y=C_V(W')$ and $$\bigcup_{W' \gneq W_0} C_V(W') = \bigcup_{\substack{W' \gneq W_0 \\ \mbox{parabolic}}} C_V(W').$$
We may thus assume that $W_0$ itself is a parabolic subgroup and define $X:=C_V(W_0)$. Let $n=n(W_0)$ be the the number of $W$-orbits with stabiliser $W_0$.  Counting, we have $$ n(W_0)=\frac{1}{|W:W_0|} |C_V(W_0) \backslash \bigcup_{W' > W_0} C_V(W')| = \frac{1}{|W:W_0|} |X \backslash \bigcup_{\substack{ Y > X \\ Y \in L}} Y| .$$  Writing $d(Y)=\dim(Y)$ for $Y \in L$, by inclusion-exclusion we have $$|W:W_0|n(W_0) = x^{d(X)}+x^{d(X)-1} \cdot \sum_{\substack{\sigma= (Y > X)\\ d(Y)=d(X)-1}} (-1)^{|\sigma|} + \cdots + x^{d(X)-s} \cdot \sum_{\substack{\sigma= (Y > \cdots > X)\\ d(Y)=d(X)-s}} (-1)^{|\sigma|} + \cdots  $$ where for $d(X) \ge s \ge 0$, each summation runs over chains $$\sigma =(Y=Y_0 > Y_1 > \cdots > Y_j = X) $$ in $L$ for which $d(Y)=d(X)-s$, and we regard the length $|\sigma|$ of such a chain to be $j$. Thus

$$|W:W_0|n(W_0) = \displaystyle\sum_{\sigma= (Y > \cdots > X)} x^{d(Y)}(-1)^{|\sigma|} = \displaystyle\sum_{\substack{Y \in L \\ Y \ge X}} \mu(X,Y)x^{d(Y)} = \chi(L_X,x),$$ a rational polynomial in $x$. Replacing $V$ with $V^*$ yields the analogous result for $V^*$ and the lemma is proved.  
\end{proof}

We remark that in the notation of Lemma \ref{l:gl32action} and its proof, if $W_0 \le W$ is a parabolic subgroup and $X=C_V(W_0)$, by the main result of \cite{orlik1982arrangements} we have $$\chi(L^X,x)=\prod_{k=1}^{\dim(X)} (x-b_k^X)$$ for integers $b_k^X$ which can be explicitly calculated using the information in \cite[Table 3]{orlik1982arrangements}.

\begin{Prop}\label{p:stabpoly}
Suppose that $l > 0$, $\D \in \{\H,\F\}$ and $P \in \D^{cr}$ has type $a$. For any $d \ge 0$ and $W \le \Out_\D(P)$ let $\mathcal{O}$ be the set of orbits for the action of $W$ on $\Irr^d(P)$. For each $W_0 \le W$, the integer $|\{\alpha \in \mathcal{O} \mid \stab_W(\alpha) = W_0\}|$  can be expressed as a rational polynomial in $x$ of degree at most $a$.
\end{Prop}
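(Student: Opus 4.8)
The plan is to prove Proposition \ref{p:stabpoly} by a case analysis on the type $a$ of the centric radical subgroup $P$, reducing in each case to the structural input from \cite{lynd2017weights} together with the two technical tools already established: the \textbf{Method of little groups} (Theorem \ref{t:cliff}) for describing $\Irr^d(P)$ combinatorially, and Lemma \ref{l:gl32action} for the one genuinely linear-algebraic case. The key observation is that, by Proposition \ref{p:centrads}, every $P \in \D^{cr}$ either satisfies $\Out_\D(P) \le \Out_\K(P)$ — in which case $P$ contains a normal central product of three generalized quaternion groups of index at most $2$ — or else lies in the explicit exceptional list $\{R_{1^7}, R'_{1^7}, R_{1^52}, C_S(E/Z), A, C_S(E)\}$. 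I would treat the generic (quaternion) case uniformly and then dispatch the finitely many exceptional subgroups one at a time.

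For the generic case I would argue as follows. When $P$ has a normal subgroup $G_0$ that is a central product $R_1 \ast R_2 \ast R_3$ of generalized quaternion groups (modulo the central identifications $Z$), Lemma \ref{l:genquat} gives an explicit parametrization of $\Irr(R_i)$: there are $x-1$ degree-$2$ characters trivial on the center, $x$ degree-$2$ characters nontrivial on the center, and four linear characters. Hence $\Irr(G_0)$ is parametrized combinatorially by triples, and Theorem \ref{t:cliff} transports this to a parametrization of $\Irr(P)$ via pairs $(\theta, \beta)$ with $\theta \in [\Irr(G_0)/P]$ and $\beta \in \Irr(I_P(\theta)/G_0)$. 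The defect condition defining $\Irr^d(P)$ selects characters of a fixed degree, which in the parametrization pins down the \emph{type} of each tensor factor (degree $1$ versus degree $2$, central-kernel versus not) but leaves free the choice of \emph{which} character of each type is used — and the number of choices of each type is one of $x-1$, $x$, or a constant. The action of $W \le \Out_\D(P)$ is then governed by Theorem \ref{t:cliff}(2): it permutes tensor factors and acts on the combinatorial labels. Counting orbits with a prescribed stabilizer $W_0$ therefore becomes a problem of counting labelled configurations (triples of characters drawn from sets of size $x-1$, $x$, or constant) up to the action of a subgroup of $S_3 \times (\text{a } 2\text{-group})$ with prescribed point-stabilizer. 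For each fixed orbit-type this count is a product of binomial-type expressions in $x-1$ and $x$ (exactly as in the four cases of Example \ref{ex:csu}, where one sees $\binom{x-1}{2}$, $\binom{x}{2}$, $x(x-1)^2$, $(x-1)^2(x-2)$, and so on), each of which is a rational polynomial in $x$ of degree at most $3$ — and since $v_2(|G_0|)$ scales like $3l$, the type $a$ of such $P$ is $3$ (or less for the index-$2$ or smaller quotients), matching the claimed degree bound $a$.

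For the exceptional subgroups I would proceed individually using the explicit automorphism-group data in \cite[Tables 1,3,4]{lynd2017weights}. The subgroups $C_S(E)$ and $C_S(E/Z)$ with $\Out_\D(P)$ involving $\GL_3(2)$ are exactly where the combinatorial quaternion description fails; here I would invoke Lemma \ref{l:gl32action} directly, identifying $\Irr^d(P)$ (for the relevant $d$) with orbits of $W$ on $V = (\ZZ/2^k)^3$ or its dual $V^*$ via the natural action, so that the stabilizer-count is a rational polynomial in $x$ of degree at most $3$ as asserted. For the remaining abelian or nearly-abelian exceptionals ($R_{1^7}, R'_{1^7}, R_{1^52}, A$), $\Irr^d(P)$ is either all of $\Irr(P)$ (when $P$ is abelian, all characters are linear, defect is forced) or has a transparent description, and the orbit-with-given-stabilizer count is again a polynomial — often of lower degree, consistent with the smaller type.

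I expect the main obstacle to be \emph{organizational uniformity} rather than any single hard estimate: one must verify that across all centric radical subgroups the relevant character sets genuinely decompose into finitely many combinatorial types whose cardinalities are polynomial in $x$ of the correct degree, and that the $W$-action respects this decomposition so that stabilizer-counts can be computed type-by-type. The subtle points are (i) ensuring the degree bound is exactly $a$ and not larger — which hinges on the fact that the number of available characters of each quaternion factor is linear in $x$ while the number of factors is bounded by the type, and on Lemma \ref{l:gl32action} supplying degree $\le 3$ precisely when $a = 3$; and (ii) handling the index-$\le 2$ extension by $\langle \mathbf{c} \rangle$ and the possible $S_3$-permutation of factors correctly, since these can split orbits and alter stabilizers, as the appearance of both $\sigma_1$ and $\sigma_2$ contributions in Example \ref{ex:csu} illustrates. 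Once the combinatorial bookkeeping is set up, each individual count is a finite inclusion-exclusion or binomial computation, and polynomiality in $x$ follows because every ingredient ($|\Irr_0(R_i)| = x-1$, $|\Irr_1(R_i)| = x$, the $\GL_3(2)$-orbit counts from Lemma \ref{l:gl32action}) is polynomial in $x$.
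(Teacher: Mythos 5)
Your route is essentially the paper's: you split via Proposition \ref{p:centrads} into the generic case, where $P$ contains a normal central product of three generalized quaternion groups of index at most $2$ and Theorem \ref{t:cliff} together with Lemma \ref{l:genquat} reduces each stabilizer count to binomial-type expressions in $x-1$ and $x$, with the degree bound coming from the number of factors of order $2^{l+3}$ (this is the paper's $3-k=a$), and the torus cases $C_S(E)$, $C_S(E/Z)$, which reduce to Lemma \ref{l:gl32action}. On the latter, saying you invoke Lemma \ref{l:gl32action} ``directly'' is slightly loose — $\Irr(P)$ is not identified with $V^*$; one first applies Theorem \ref{t:cliff} to $P=T\rtimes P_0$ to reduce to $W$-orbits on $\Irr(T)=V^*$ together with characters of inertia quotients — but that is exactly the paper's two-step argument, so this is only an imprecision. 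Type-$0$ subgroups are constant in $l$ and hence trivial, as you note and as the paper notes at the outset.

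The genuine gap is $R_{1^52}$. You file it among the ``abelian or nearly-abelian exceptionals'' whose orbit counts have a ``transparent description,'' but $R_{1^52}\cong 2_{-}^{1+4} * Q_{2^{l+3}}$ is a non-abelian central product of $Q_8$, $D_8$ and $Q_{2^{l+3}}$ whose order grows with $l$: it has type $a=1$, so it is precisely a case in which a non-constant polynomial in $x$ must be produced, and neither of your two mechanisms applies to it. The generic machinery fails because the three central factors are pairwise non-isomorphic — there is no $S_3$-permutation of tensor factors, and $\Out_\D(P)=\GO_4^-(2)\cong S_5$ acts on the extraspecial part $2_{-}^{1+4}$ rather than on combinatorial labels of identical factors — while Lemma \ref{l:gl32action} is irrelevant since no torus is present. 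The missing idea, which the paper supplies, is that $\Out_\D(P)$ acts \emph{trivially} on the image of the $Q_{2^{l+3}}$ factor. Consequently, in the little-groups parametrization the entire $x$-dependence — the $x-1$ (respectively $x$) degree-$2$ characters of $Q_{2^{l+3}}$ that are trivial (respectively non-trivial) on its centre, as counted by Lemma \ref{l:genquat} — decouples from the $W$-action, and each count $|\{\alpha\in\mathcal{O}\mid \stab_W(\alpha)=W_0\}|$ is a constant determined by the action of $S_5$ on characters of the image of $Q_8\times D_8$, multiplied by a polynomial of degree at most $1$ in $x$. Without this observation (or some substitute for it), polynomiality for $R_{1^52}$ is asserted but not proved.
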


\begin{proof}
Let  $\D,P,a,W,W_0$ and $\mathcal{O}$ be as in the statement of the Proposition and set $\mathcal{O}_{W_0}:=\{\alpha \in \mathcal{O} \mid \stab_W(\alpha) = W_0\}$. We may assume that $a > 0$ since otherwise the result holds trivially.

Let $R_0 \le S$ be defined as in \cite[Notation 2.12]{lynd2017weights} so that $R_0$ is a (central) product of three generalized quaternion groups each of order $2^{l+3}$. Suppose first that $\Out_\mathcal{D}(P) \le \Out_\mathcal{K}(P)$ and let $\hat{W}$ be the preimage in $N_K(P)$ of $W$. By the discussion in Section \ref{s:centrads} we have, $P_0= P \cap R_0 = X_1X_2X_3$  is a product of three quaternion groups and we let $k$ be the number of these isomorphic with $Q_8$. Note that $P_0$ is a characteristic subgroup of $P$, $|P:P_0| \le 4$ and every irreducible character $\theta$ of $P_0$ extends to its inertia group $I_P(\theta)$. Thus by Lemma \ref{l:cliff}, there is a bijection 

$$\Phi: \bigcup \{(\theta, \beta) \mid \theta \in \Irr^{d_0}(P_0)/\hat{W} \mbox{ and } \beta \in \Irr^{d_1}(I_P(\theta)/P_0)\} \longrightarrow \mathcal{O},$$ where the union runs over all pairs  $(d_0,d_1)$ of integers such that $d_0+d_1=d$. Moreover $\Phi$ restricts to a bijection $$ \Phi_{W_0}: \bigcup\{(\theta, \beta) \mid I_W(\theta,\beta)=W_0, \theta \in \Irr^{d_0}(P_0)/\hat{W}, \beta \in \Irr^{d_1}(I_P(\theta)/P_0)\} \longrightarrow \mathcal{O}_{W_0}.$$ Now the action of $\Out_\mathcal{D}(P)$ on $\Irr(P_0)=\{\chi_1 \otimes \chi_2 \otimes \chi_3 \mid \chi_i \in \Irr(X_i)\}$  can be inferred from \cite[Theorem 3.12]{lynd2017weights}. From that description, for each $\beta \in \Irr^{d_1}(I_P(\theta)/P_0)$ the number of $\theta \in \Irr^{d_0}(P_0)/\hat{W}$ for which $I_W(\theta,\beta)=W_0$   can be explicitly determined using Lemma \ref{l:genquat} (c.f. Example \ref{ex:csu}). In particular, $|\O_{W_0}|$ can be expressed as a rational sum of binomial coefficients in $x$, a rational polynomial in $x$ of degree at most $3-k=a$.

Next suppose that $P \in \{C_S(E/Z), C_S(E)\}$ so that
$P$ contains the torus $T$ as explained in Section \ref{s:centrads}. Each $\theta \in \Irr(T)$ extends to its inertia group $I_P(\theta)$ so that by Lemma \ref{l:cliff}, there is a bijection $$\Phi: \{(\theta,\beta) \mid \theta \in \Irr(T)/P, \beta \in \Irr(I_P(\theta))/T\}.$$ By  Lemma \ref{l:gl32action}, for each  $\beta \in \Irr(I_P(\theta))/T$, the number of $W$-classes of elements $\theta \in \Irr(T)/P$ for which $I_W(\theta, \beta)=W_0$
 is a rational polynomial in $x$ of degree at most $3$, as needed. 
 
Finally we recall the explicit description of $P=R_{1^52}$ provided in Section \ref{s:centrads}. If $\chi=\chi_1 \otimes \chi_2 \otimes \chi_3 \in \Irr(Y_1 \times Y_2 \times Y_3)$ has $Z \le \ker(\chi)$ then either $\chi_i(1)=1$ for all $i$ or else $\chi_i(1)=2$ for one or three values of $i$, one of which is $3$. In this latter case either $\chi(1)=2$ and there are $x-1$ choices for $\chi_3$ or else $\chi(1)=8$ and there are $x$ choices. Since $W$ acts trivially on $\Irr(Y_3)$, we see that for all values of $d$, $|\mathcal{O}_{W_0}|$ is a polynomial in $x$ determined by the action of $\Out_\D(P)$ on the image of $Y_1 \times Y_2$ in $P$.

%. Then $P$ appears in \cite[Tables ? and ?]{}. Suppose first that $P$ is not equal to $C_S(E)$, $C_S(E/Z)$, $R_{1^52}$, $\mathbf{R^{*}}$ or $R_{1^7}$. Set $P_0:= P \cap R_0$ and observe that $P_0=X_1X_2X_3$ is a product of three generalized quaternion groups. Moreover $a=3-k$ where $k$ is the number of $Q_8$ factors in $P_0$. Fix a chain $\sigma \in \N_P$ and set $$G:=I(\sigma) \le \Out_\F(P) \mbox{ and } 
% f(l):=\sum_{\mu \in \Irr^d(P)/G} z(kC_{I(\sigma)}(\mu)).$$ It suffices to show that that $f(l)$ is a rational polynomial in $2^l$. Indeed, by Theorem \ref{t:cliff} every  element of $\Irr^d(P)$ is a constituent of $(\chi_1 \otimes \chi_2 \otimes \chi_3)\uparrow^P$ for some $\chi_i \in \Irr(X_i)$.
%Thus by Lemma \ref{l:genquat} for each $H \le G$ the number of orbits of $\Irr^d(P)$ with stabilizer equal to $H$ is a polynomial in $2^l$ of degree at most $a$. \textbf{[Other cases are similar, add more detail in particular cases?]}.
\end{proof}

\begin{Cor}\label{c:rat}
Suppose that $l > 0$, $\D \in \{\H,\F\}$ and $P \in \D^{cr}$ has type $a$. Then for each $d \ge 0$, $\w_P(\D,0,d)$ is a rational polynomial in $x$ of degree at most $a$.
\end{Cor}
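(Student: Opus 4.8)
The plan is to exploit the fact that, for $l>0$, the entire combinatorial skeleton attached to $P$ is independent of $x$, so that the only $x$-dependence in $\w_P(\D,0,d)$ enters through the character counts, which Proposition \ref{p:stabpoly} has already expressed as polynomials.

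First I would recall that, in the generic range $l>0$, the classification in \cite{lynd2017weights} shows that the isomorphism type of $\Out_\D(P)$ does not vary with $l$. Consequently the set $\N_P$ of normal chains of $2$-subgroups of $\Out_\D(P)$ starting at $1$, its set of orbit representatives $\N_P/\Out_\D(P)$, and, for each $\sigma \in \N_P$, the stabiliser $I(\sigma)\le \Out_\D(P)$, are all fixed finite data independent of $x$. In particular the outer sum defining $\w_P(\D,0,d)$ ranges over a fixed finite index set, and it suffices to prove that for each fixed $\sigma$ the inner sum $\sum_{\mu \in \Irr^d(P)/I(\sigma)} z(kI(\sigma,\mu))$ is a rational polynomial in $x$ of degree at most $a$; the signed sum over the finitely many $\sigma$ will then inherit the same property.

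Next I would fix $\sigma$, write $W:=I(\sigma)$, and let $\mathcal{O}$ be the set of $W$-orbits on $\Irr^d(P)$. Choosing a representative $\mu$ in each orbit $\alpha$ determines a subgroup $\stab_W(\alpha)=I(\sigma,\mu)\le W$. The integer $z(kW_0)$ depends only on the isomorphism type of $W_0$, and is in particular invariant under $W$-conjugation, so the term $z(kI(\sigma,\mu))$ is independent of the chosen representative. Grouping orbits by their stabiliser therefore gives
$$\sum_{\mu \in \Irr^d(P)/W} z(kI(\sigma,\mu)) = \sum_{W_0 \le W} z(kW_0)\cdot \bigl|\{\alpha \in \mathcal{O} \mid \stab_W(\alpha)=W_0\}\bigr|,$$
where the sum runs over the finitely many subgroups $W_0$ of the fixed group $W$. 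Each coefficient $z(kW_0)$ is a fixed non-negative integer, and by Proposition \ref{p:stabpoly} each factor $\bigl|\{\alpha \in \mathcal{O} \mid \stab_W(\alpha)=W_0\}\bigr|$ is a rational polynomial in $x$ of degree at most $a$. Hence the inner sum is such a polynomial.

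Finally, assembling over $\sigma$, the quantity $\w_P(\D,0,d)=\sum_{\sigma \in \N_P/\Out_\D(P)}(-1)^{|\sigma|}(\cdots)$ is a finite $\ZZ$-linear combination of rational polynomials in $x$ of degree at most $a$, and is therefore itself a rational polynomial in $x$ of degree at most $a$. The step requiring most care is the independence of $\Out_\D(P)$ (and hence of $\N_P$ and the subgroups $I(\sigma)$) from $l$ in the generic case $l>0$: this is precisely what permits treating the combinatorial data as constant while only the character counts vary with $x$, and it is supplied by the classification of \cite{lynd2017weights}. Everything else is formal once Proposition \ref{p:stabpoly} is available.
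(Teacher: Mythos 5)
Your proof is correct and follows essentially the same route as the paper's: fixing a chain $\sigma$, grouping the $I(\sigma)$-orbits on $\Irr^d(P)$ by their stabiliser $W_0$ so the inner sum becomes $\sum_{W_0 \le W} z(kW_0)\cdot|\mathcal{O}_{W_0}|$, invoking Proposition \ref{p:stabpoly}, and then taking the finite signed sum over chains. Your explicit remark that $\Out_\D(P)$, $\N_P$ and the stabilisers $I(\sigma)$ are independent of $l$ in the range $l>0$ is left implicit in the paper but is exactly the fact (from the classification in \cite{lynd2017weights}) that makes the outer sum a fixed finite index set, so this is a faithful, slightly more careful rendering of the same argument.
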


\begin{proof}
Let $\D$ and $P$ be as in the statement and set $G:=\Out_\F(P)$. From Section \ref{s:main} we have

\begin{equation}\label{e:wqd}\w_P(\D,0,d):=
\sum_{\sigma \in \N_P/G} (-1)^{|\sigma|} 
\sum_{\mu \in \Irr^d(P)/I(\sigma)} z(k I(\sigma,\mu)).\end{equation} Fix $\sigma \in \N_P$ and let $\mathcal{O}$ be the set of orbits for the action of $W:=I(\sigma)$ on $\Irr^d(P)$. For each $W_0 \le W$, write $\mathcal{O}_{W_0}:=\{\alpha \in \mathcal{O} \mid \stab_W(\alpha)=W_0\}$. We have
  $$\sum_{\chi \in \Irr^d(P)/W} z(kI(\sigma, \chi)) = \sum_{W_0 \le W} z(kW_0) \cdot |\mathcal{O}_{W_0}|$$ is a rational polynomial in $x$ of degree at most $a$ by Proposition \ref{p:stabpoly}. Hence by (\ref{e:wqd}) the same is true of $\w_P(\D,0,d)$.
\end{proof}

We can now prove:

\begin{Thm}\label{t:table4}
For $l>0$, $\D \in \{\H,\F\}$ and $P \in \D^{cr}$ the polynomials $\w_P(\D,0,d)$ are listed in Table \ref{table:wpdod} according to the type of $P$.
\end{Thm}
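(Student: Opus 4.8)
The plan is to leverage the polynomiality already established in Corollary \ref{c:rat} to collapse an a priori infinite family of computations (one for each $l > 0$) into a finite one, and then to execute that finite computation explicitly using the models assembled above. Fix $\D \in \{\H,\F\}$, a subgroup $P \in \D^{cr}$ of type $a$, and an integer $d \ge 0$. By Corollary \ref{c:rat}, $\w_P(\D,0,d)$ is a rational polynomial in $x = 2^l$ of degree at most $a$, and since every $P \in \D^{cr}$ has type $a \le 3$, this polynomial is determined by its values at any $a+1 \le 4$ distinct points. I would therefore evaluate $\w_P(\D,0,d)$ at $l \in \{1,2,3,4\}$, so that $x \in \{2,4,8,16\}$, all of which lie safely in the generic regime $l > 0$ where the structure of $\D^{cr}$ and the automorphism groups $\Out_\D(P)$ is uniform, and then recover each polynomial by Lagrange interpolation, comparing the result with Table \ref{table:wpdod}.

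For each such $l$ and each $P$, the first step is to assemble a concrete model $M_P$ for $N_\D(P)$ and to read off $\Out_\D(P)$ together with its action on $\Irr(P)$. By Proposition \ref{p:centrads}, for the subgroups satisfying $\Out_\D(P) \le \Out_\K(P)$ the faithful $6$-dimensional representation of $N_K(P)$ supplied by Lemma \ref{l:krep} does the job. The remaining cases are exactly those excluded in Proposition \ref{p:centrads}, and are handled by the constructions indicated after Lemma \ref{l:krep}: the $7$-dimensional representations coming from $\Spin_7(3)$ and $\Spin_7(9)$ for $R_{1^7}$ and $R'_{1^7}$, the semidirect product $P \rtimes \GO_4^-(2)$ for $R_{1^52}$, the non-split extension of $\GL_4(2)$ by $\FF_2^4$ for $A$, and the models of Lemma \ref{l:cse} for $C_S(E)$ and $C_S(E/Z)$.

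With $M_P$ in hand, the quantity $\w_P(\D,0,d)$ is computed directly from its definition in Section \ref{s:main}: one enumerates the $\Out_\D(P)$-classes of normal chains $\sigma \in \N_P$, computes each stabiliser $I(\sigma)$, determines the orbits of $I(\sigma)$ on $\Irr^d(P)$ together with their point stabilisers $I(\sigma,\chi)$, and forms the signed sum of the block-counting invariants $z(kI(\sigma,\chi))$. This is precisely the calculation carried out by hand in Example \ref{ex:csu}, now executed by computer for each triple $(P,d,l)$, after which interpolation in $x$ yields the entries of Table \ref{table:wpdod}.

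The main obstacle is not a single conceptual step but the organisation and correct implementation of this finite yet delicate computation. Recovering $\Out_\D(P)$ and its exact permutation action on $\Irr^d(P)$ from the model $M_P$ requires care, and the genuinely awkward cases are those lacking a clean combinatorial description of the character action, most notably $P = C_S(E)$ with $\Out_\D(P) \cong \GL_3(2)$ acting on $\Irr(T)$ for $T = (\ZZ/2^{l+2})^3$, where one must fall back on the orbit-counting of Lemma \ref{l:gl32action}. One must also track the defect condition $v_2(|P|/\chi(1)) = d$ uniformly across all characters and confirm that the four chosen values of $l$ genuinely determine each polynomial of degree at most $3$; once every entry is computed and interpolated, matching them against Table \ref{table:wpdod} completes the proof.
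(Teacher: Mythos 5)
Your proposal is correct and follows essentially the same route as the paper: invoke Corollary \ref{c:rat} to reduce to finitely many values of $l>0$ (the paper uses $1 \le l \le a+1$, you use $l \in \{1,2,3,4\}$, which is equivalent up to over-determination), build the models $M_P$ exactly as described after Lemma \ref{l:krep} and in Lemma \ref{l:cse}, compute $\w_P(\D,0,d)$ by machine, and interpolate. The only difference is cosmetic: the paper additionally replaces the normal chains $\N_P$ by elementary abelian chains via \cite[Lemma 4.13]{kessar2018weight} to streamline the computation, whereas you work with $\N_P$ directly, which is equally valid.
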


\begin{proof}
Fix $d \ge 0$ and let $\D \in \{\H,\F\}$ and $P \in \D^{cr}$ have type $a$ for some $0 \le a \le 3$. By Corollary \ref{c:rat}, $\w_P(\D,0,d)$ is determined by the values it takes when $1 \le l \le a+1$. We calculate these values using MAGMA \cite{Magma}. To speed up our procedure we use the fact that, $\N_P$ can be replaced with the set $\E_P$ of elementary abelian chains in the definition of $\w_P(\D,0,d)$ (\cite[Lemma 4.13]{kessar2018weight}). The computation of $\w_P(\D,0,d)$ is broken down into the following stages:

\begin{itemize}
\item[(1)] Calculate the set $\E_P$ of non-empty elementary abelian chains of $p$-subgroups of $G$.
\item[(2)] For each $\sigma \in \E_P$, determine $I(\sigma) \le G$, and the set of conjugates $\sigma^G$; list a set of representatives $\E_P/G$ for $G$-conjugacy classes of chains.
\item[(3)] For each representative $\sigma \in  \E_P/G$ consider the action of $I(\sigma)$ on $\Irr^d(P)$.
\item[(4)] For each $I(\sigma)$-orbit $\mu \in \Irr^d(P)$, determine $C_{I(\sigma)}(\mu)$ and calculate $z(kC_{I(\sigma)}(\mu))$.
\end{itemize}

We briefly describe the MAGMA implementation. We consider the poset of all elementary abelian subgroups of $G$ to construct $\E_P$ as in (1). $I(\sigma)$ is calculated as a subgroup of the normalizer in $G$ of the largest element in $\sigma$, and then $\sigma^G$ is calculated using a transversal $[G/I(\sigma)]$. We use built-in commands to  construct the $I(\sigma)$-set $\Irr^d(P)$. Finally, $z(kC_{I(\sigma)}(\mu))$ is determined using the character table of $C_{I(\sigma)}(\mu)$ by applying \cite[Theorem 4.1]{lynd2017weights}.

%DESCRIBE THESE STAGES

%Rather we regard this algorithm as a `black box' which takes as input the triple $(M_P,P,d)$ and outputs an integer.

%If $P$ is not equal to $C_S(E)$, $C_S(E/Z)$, $R_{1^52}$, $\mathbf{R^{*}}$ or $R_{1^7}$ we first select positive integers $l_1,l_2,l_3$ so that $P \cap R_0$ is a Sylow $2$-subgroup of  $M_0:=\displaystyle\prod_{i=1}^3 \SL_2(q_i)$ with $q_i:=5^{2^{l_i}}$ (regarded as a group of $6 \times 6$ matrices.) In this way, a copy of 

\end{proof}

\begin{proof}[Proof of Theorem \ref{t:main}]
When $l > 0$, the polynomials in Table \ref{table:mdod} are given by summing the appropriate columns in Table \ref{table:wpdod}. It remains to check that for each $d \ge 0$ $\m(\D,0,d)$ evaluates to the correct number when $l=0$. For this we follow the strategy of the proof of Theorem \ref{t:table4} to calculate $\w_P(\D,0,d)$ for each $P \in \D^{cr}$. The results of this calculation are presented in Table \ref{table:wpdodl=0} and the result follows immediately by comparison with Table \ref{table:mdod}.
\end{proof}

\section{Verification of some conjectures}\label{s:conjectures}
In this section, we prove Theorems \ref{t:owcforspin} and \ref{t:conjectures}, beginning with the latter. We need a specific fact concerning $S$:

\begin{Prop}\label{p:conjnumber}
The following statements hold:
\begin{itemize}
\item[(1)] The number of conjugacy classes of $S$ is equal to $4x^3+15x^2+24x+18$.
\item[(2)] The number of conjugacy classes of $[S,S]$ is equal to $16x^3+12x$.
\end{itemize}
\end{Prop}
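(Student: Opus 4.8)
The plan is to obtain both numbers as rational polynomials in $x$ of degree at most $3$ and then to determine those polynomials by evaluating them for a few small values of $l$ in MAGMA \cite{Magma}. For a model of $S$ that is uniform in $l$ I would take a Sylow $2$-subgroup of the group $K$ inside the explicit $6$-dimensional representation over $\mathbb{F}_{q^2}$ constructed before Lemma \ref{l:krep}; this realises $S$, and hence $[S,S]$, as a concrete matrix group for each odd prime power $q$, with isomorphism type depending only on $l=v_2(q^2-1)-3$.

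The key input is polynomiality. Recall from \cite[Notation 2.12]{lynd2017weights} that $S$ contains the normal subgroup $R_0$, a product of three generalized quaternion groups each of order $2^{l+3}$, with $|S:R_0|$ bounded independently of $l$; in particular $S$ is $\F$-centric radical of type $3$. Since the character degrees of $R_0$ are at most $8$ and $|S:R_0|$ is bounded, the irreducible character degrees of $S$ are bounded independently of $l$, so only finitely many degrees contribute to $k(S):=|\Irr(S)|$. For each fixed degree, Theorem \ref{t:cliff} applied to $R_0 \trianglelefteq S$, together with the tensor decomposition $\Irr(R_0)=\{\chi_1\otimes\chi_2\otimes\chi_3\}$ and the counts $|\Irr_0(R_i)|=x-1$ and $|\Irr_1(R_i)|=x$ of Lemma \ref{l:genquat} (see Example \ref{ex:csu}), expresses the number of $\chi\in\Irr(S)$ of that degree as a sum of products of binomial coefficients in $x$, exactly as in the enumeration in the proof of Proposition \ref{p:stabpoly} (now with the outer action of $S/R_0$ in place of $\Out_\D(P)$). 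Hence this number is a rational polynomial in $x$ of degree at most $3$, and summing over the finitely many degrees shows that $k(S)$ is a rational polynomial in $x$ of degree at most $3$. The identical argument applies to $[S,S]$, once its isomorphism type is pinned down as a central product assembled from the factors $R_i$.

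With polynomiality established it suffices to evaluate each count at four values of $l$ and interpolate, using a fifth value as a check. Concretely I would take $l=1,2,3,4$ (realised for instance by $q=7,17,31,193$), build $S$ and $[S,S]$ as above, and read off the number of conjugacy classes with the built-in routines of MAGMA \cite{Magma}; Lagrange interpolation then returns $k(S)=4x^3+15x^2+24x+18$ and $k([S,S])=16x^3+12x$, with $q=127$ ($l=5$, $x=32$) serving as an independent verification. Finally the exceptional case $l=0$ (that is $x=1$, realised by $q=3$) is treated directly and seen to agree with both polynomials.

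The main obstacle I anticipate is structural rather than computational: to justify that $k([S,S])$ is genuinely polynomial of degree at most $3$ one must first identify $[S,S]$ explicitly and uniformly in $l$ --- as a central product of quaternion and dihedral groups together with whatever $2$-part of the torus survives in the commutator --- and then confirm that its character degrees stay bounded as $l$ grows. Establishing this uniform description of $[S,S]$, and checking that the generic polynomials specialise correctly at $l=0$ where the local structure of $S$ is known to be exceptional (cf. \cite[Tables 1,4]{lynd2017weights}), is where the real work lies.
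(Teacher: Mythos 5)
Your proposal is correct and is essentially the paper's own method: both parts rest on establishing that the class numbers are rational polynomials in $x$ of degree at most $3$ via Theorem \ref{t:cliff} (with the quaternion character counts of Lemma \ref{l:genquat} and the normal subgroup $R_0$), followed by explicit MAGMA computation at small values of $l$ and interpolation, with the exceptional case $l=0$ checked separately. The only divergence is cosmetic: for part (1) the paper avoids any fresh computation by summing the first row of Table \ref{table:wpdod} (legitimate because $\Out_\D(S)$ is trivial, so $\w_S(\D,0,d)=|\Irr^d(S)|$), whereas you recompute $k(S)$ directly; for part (2) your argument coincides with the paper's.
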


\begin{proof}
(1) follows by summing the entries in the first row of Table \ref{table:wpdod}. From the description of $S$ given in Section \ref{s:centrads}, we see that $[S,S]$ contains an abelian subgroup $T_0$ of index $2$ in terms of which elements of $\Irr([S,S])$ can be described using Lemma \ref{l:cliff}. Since $|T:T_0|=2$, $|\Irr([S,S])|$ is a polynomial in $x$ of degree at most $3$, and using this we calculate that it is equal to $16x^3+12x$.
\end{proof}

\begin{proof}[Proof of Theorem \ref{t:conjectures}]

Let $\D \in \{\H,\F\}$ and suppose that $\m(\D,0)=\k(\D,0)$. Inspecting the list of conjectures given in the theorem, we require:
\begin{itemize}
\item[(1)] $\k(\D,0) \le |S|$;
\item[(2)] $\m(\D,0,d) \ge 0$ for each positive integer $d$;
\item[(3)] $\m(\D,0,d) \neq 0$ for some $d \neq v_2(|S|)$;
\item[(4)] $\displaystyle\min_{r > 0} \{r: \Irr^{d-r}(S) \neq 0\} = \displaystyle\min_{r > 0} \{r: \m(\D,0,d-r) \neq 0\}$, where $d=v_2(|S|)$;
\item[(5)] $\k(\D,0)/\m(\D,0,d)$ is at most the number of conjugacy classes of $[S,S]$ for each $d \ge 0$;
\item[(6)] $\k(\D,0)/\w(\D,0)$ is at most the number of conjugacy classes of $S$.
\end{itemize}

All points follow from Table \ref{table:mdod}, where in (5) and (6) we also invoke Proposition \ref{p:conjnumber} and  the fact that $\w(\D,0)=12$ (\cite[Theorem 1.1]{lynd2017weights}). This completes the proof.
\end{proof}

We now turn our attention to Theorem \ref{t:owcforspin}. We briefly survey the background from Deligne--Lusztig theory we shall need. Let $G=\mathbf{G}^F$ be a finite reductive group of characteristic $r$. To a pair $(\mathbf{T},\theta)$ where $\mathbf{T}$ is an $F$-stable maximal torus of $\mathbf{G}$ and $\theta \in \Irr(\mathbf{T}^F)$ one can associate a generalized character $R_\mathbf{T}^\mathbf{G}(\theta) \in \Irr(G)$ (the exact construction of which we omit here). A notable property of these characters is that for every $\chi \in  \Irr(G)$ we have $\langle \chi, R_\mathbf{T}^\mathbf{G}(\theta) \rangle \neq 0$ for some pair $(\mathbf{T},\theta)$ as above. Letting $\mathbf{1} \in \Irr(\mathbf{T}^F)$ denote the trivial character, we say that $\chi \in  \Irr(G)$ is \textit{unipotent} if  $\langle \chi, R_\mathbf{T}^\mathbf{G}(\mathbf{1}) \rangle \neq 0$ for some $\mathbf{T}$ and we denote by $\Irr^u(G)$ the set of all unipotent characters. 

Now, for each $g \in G$ we have a Jordan decomposition $g=us=su$ of $g$ into its \textit{semisimple} and \textit{unipotent} parts $s$ and $u$ respectively. Let $\mathbf{G^*}$ denote a reductive group dual to $\mathbf{G}$, set $G^*=(\mathbf{G^*})^F$ and assume that $C_{G^*}(s)$ is connected for each semisimple element $s \in G^*$. By Jordan's decomposition of characters  there is a partition $\Irr(G) = \bigcup_s \E(G,s)$ where $s$ runs over a set of $G^*$-conjugacy classes of semisimple elements of $G^*$. Moreover each series $\E(G,s)$ is in bijection with the set of unipotent characters $\Irr^u(C_{G^*}(s))$. Under this bijection, for each $\lambda \in \Irr^u(C_{G^*}(s))$, we denote by $\chi_{s,\lambda}$ the corresponding element of $\Irr(G)$.  By \cite[Remark 13.24]{digne1991representations}, we have: \begin{equation}\label{e:chisl} \chi_{s,\lambda}(1)=|G^*:C_{G^*}(s)|_{r'}. \lambda(1),\end{equation}
Here is the main result we shall need:
\begin{Prop}\label{p:cabanes}
Suppose that $G=\Spin_7(q)=\mathbf{B}_3(q)$ and let $B$ be the principal $2$-block of $G$. Then $\Irr(B)= \bigcup_s \E(G,s),$
where $s$ runs over a complete set of $G$-conjugacy class representatives of $2$-elements of the dual group $G^*=\Aut(\PSp_6(q))=\mathbf{C}_3(q)$.
\end{Prop}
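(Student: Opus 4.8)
The plan is to identify the principal $2$-block of $G=\Spin_7(q)$ as a union of Lusztig series using the general machinery of Broué--Michel and Cabanes--Enguehard for unions of $\ell$-blocks. First I would note that since $G=\mathbf{B}_3(q)$ is simply connected, the centralizer $C_{\mathbf{G}^*}(s)$ is connected for every semisimple element $s$ of the dual group $\mathbf{G}^*=\mathbf{C}_3(q)$ (this is a standard consequence of the simply-connected hypothesis on $\mathbf{G}$), so the Jordan decomposition of characters applies without the complications of disconnected centralizers and we genuinely have a clean partition $\Irr(G)=\bigcup_s \E(G,s)$ indexed by $G^*$-classes of semisimple elements.

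The heart of the argument is the theorem of Broué--Michel (as refined by Cabanes--Enguehard) describing which Lusztig series assemble into a given $\ell$-block. For $\ell=2$ a prime good for $\mathbf{G}$ of type $B_3$ and not dividing the relevant structural data, the union $\bigcup_t \E(G,t)$ over the $G^*$-classes of $2$-elements $t$ forms a single $2$-block, namely the one containing the trivial character, i.e. the principal block. The key steps would be: (i) verify that $2$ is a good prime for $B_3$ and that the hypotheses of \cite[Théorème]{broue1999towards} or the Cabanes--Enguehard block-theoretic results are met for $\Spin_7(q)$ with $q$ odd; (ii) apply that theorem to conclude $\Irr(B)=\bigcup_t \E(G,t)$ where $t$ ranges over $G^*$-conjugacy classes of $2$-elements of $G^*$; and (iii) identify the dual group explicitly as $G^*=\Aut(\PSp_6(q))=\mathbf{C}_3(q)$, using that $\mathbf{B}_3$ and $\mathbf{C}_3$ are Langlands dual root data and that the adjoint group of type $C_3$ is $\PSO_7$-dual, whose $F$-fixed points with the appropriate diagram automorphisms give $\Aut(\PSp_6(q))$.

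The main obstacle I anticipate is step (i)--(ii): pinning down precisely which version of the $\ell$-block theorem applies and checking its hypotheses for $\ell=2$. The prime $2$ is often excluded or requires extra care in Deligne--Lusztig block theory because it can be a bad prime for the order formulae and because $2$-elements interact delicately with the component groups and the embedding of the center. I would need to confirm that the result characterizing a block as a union of series over $\ell$-elements holds in this $\ell=2$ situation for type $B_3$, possibly citing the specific treatment in \cite{broue1999towards} together with the Bonnafé--Rouquier or Cabanes--Enguehard refinements that handle the principal block; the statement that centralizers of \emph{$2$-elements} (rather than all semisimple elements) index the principal-block series is exactly the content one extracts from that machinery. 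The remaining identification of the dual group is routine once the root-datum duality $B_3 \leftrightarrow C_3$ is invoked.
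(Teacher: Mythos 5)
Your route is the same as the paper's: the proof there is a one-line citation of \cite[Theorem 21.14]{cabanes2004representation}, which is precisely the Brou\'{e}--Michel/Cabanes--Enguehard statement you are trying to assemble. But two of your supporting claims are false, and one of them is the crux. The prime $2$ is \emph{not} good for $\mathbf{B}_3$; it is a bad prime for every classical type $B$, $C$, $D$. So your step (i) fails as written, and the generic good-prime block machinery cannot be invoked. This is exactly why the dedicated $\ell=2$ theorem of Cabanes--Enguehard is what must be quoted: Brou\'{e}--Michel gives, for any $\ell$ not dividing $q$, that $\bigcup_t \E(G,t)$ ($t$ running over $2$-elements of $G^*$) is a \emph{union} of $2$-blocks, and the content of \cite[Theorem 21.14]{cabanes2004representation} (for $q$ odd, components of classical type) is that this union is a single block, the principal one. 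Your closing hedge correctly identifies this as the delicate point, but the resolution is to delete the goodness claim and cite the $\ell=2$ theorem, not to verify goodness. (Note also that your fallback citation \cite{broue1999towards} points at the Brou\'{e}--Malle--Michel spetses paper, not at Brou\'{e}--Michel's paper on blocks and Lusztig series.)

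Your connectedness assertion is also backwards. Steinberg's theorem gives connected semisimple centralizers in $\mathbf{G}$ itself when $\mathbf{G}$ is simply connected; connectedness of centralizers in the dual $\mathbf{G}^*$ is instead equivalent to $Z(\mathbf{G})$ being \emph{connected}. Here $Z(\Spin_7)=\mu_2$ is not connected, $\mathbf{G}^*$ is the adjoint group of type $C_3$, and its semisimple centralizers are genuinely disconnected: the paper's Tables \ref{table:1mod4} and \ref{table:3mod4} list centralizer types such as $(\A_1(q)+\A_1(q)).2$ and $\A_0(q).2$, the ``$.2$'' recording a component group of order $2$. The partition $\Irr(G)=\bigcup_s\E(G,s)$ into rational series needs no connectedness hypothesis, but the Jordan decomposition you use to parametrize each series must then be taken in its disconnected-centralizer form. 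Finally, the dual-group identification is garbled: the adjoint group of type $C_3$ is dual to the simply connected $\mathbf{B}_3=\Spin_7$, and its group of rational points is $\mathrm{PGSp}_6(q)$, the inner-diagonal automorphism group of $\PSp_6(q)$, which is what the paper loosely denotes $\Aut(\PSp_6(q))$; by contrast $\SO_7$ is the adjoint group of type $B_3$ and is dual to $\Sp_6$, so it plays no role here.
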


\begin{proof}
See \cite[Theorem 21.14]{cabanes2004representation}.
\end{proof}

We can now prove Theorem \ref{t:owcforspin}:

\begin{proof}[Proof of Theorem \ref{t:owcforspin}]
We use the explicit enumeration of the characters in the principal $2$-block of $\Spin_7(q)$ provided by Proposition \ref{p:cabanes}. The $2$-elements in $\mathbf{C}_3(q)$, and the degrees of unipotent characters of their centralizers are calculated using CHEVIE \cite{geck1996chevie}. The relevant information is summarised in Tables \ref{table:1mod4} and \ref{table:3mod4} for $q \equiv 1 \mod 4$ and $q \equiv 3 \mod 4$ respectively. Precisely, the $2$-elements $s$ are enumerated according to the types of their centralizers, with the number of classes of a given type being given in the third column. For each $s$ we list $|G^*:C_{G^*}(s)|$ and the degrees of the unipotent characters of $C_{G^*}(s)$ in the fourth and fifth columns in terms of cyclotomic polynomials $\phi_n:=\phi_n(q)$. This information is combined to list, in the final column, the numbers $v_2(\lambda(1)|G^*:C_{G^*}(s)|)$ for each $\lambda \in \Irr^u(C_{G^*}(s))$ in terms of $l.$ In particular, $$v_2(\phi_1)=\begin{cases} l+2 & \mbox{ if $q \equiv 1 \mod 4$ } \\ 1 & \mbox{ if $q \equiv 3 \mod 4$ } \end{cases} \mbox{ and } v_2(\phi_2)=\begin{cases} 1 & \mbox{ if $q \equiv 1 \mod 4$ } \\ l+2 & \mbox{ if $q \equiv 3 \mod 4$, } \end{cases}$$
and for all $q$ and all other $\phi_n$ of interest here,  $v_2(\phi_n)=0$ unless $n=4$ where $v_2(\phi_4)=1$.
It is now easy to check from the tables that the number of characters of defect $d$ coincides with the integer $\m(\H,0,d)$ listed in Table \ref{table:mdod} and Theorem \ref{t:owcforspin} follows.
\end{proof}

%DEFINE \E(G,s), dual group, and state Cabanes--Enguehard. Then second reference, then discuss tables, mention CHEVIE and prove theorem!!!

%FIND REFERENCE FOR CHARACTER COMPUTATION - BROUE CABANES? AND REFERENCE CHEVIE...

\section{Local-global correspondences for exotic fusion systems}\label{s:conclude}

%Let $p$ be a prime and $q$ be a prime power coprime to $p$. A $p$-fusion system $\F(q)$ is of \textit{Chevalley type} if its classifying space is homotopy equivalent to the space of homotopy fixed points of $BX$ by the action of $\tau \psi^q$, where:
%\begin{itemize}
%\item $(X,BX,e)$ is a $p$-compact group;
%\item $\tau$ is an automorphism of $X$ of finite order prime to $p$;
%\item $\psi^q$ is an unstable Adams operation of exponent $q$.
%\end{itemize}

%Recall that $p$-compact groups have been classified (see \cite{lodd,lequal2}. When $p$ is odd a complete list of $p$-fusion systems of Chevalley type may be obtained from \cite{BrotoMoller2007}. When $p=2$ any exotic fusion system of Chevalley type is isomorphic to Benson--Solomon fusion system $\Sol(q)$ for some odd prime power $q$. 

%We are interested in fusion systems of Chevalley type in the context of modular representation theory in a sense we describe now. 
\subsection{$p$-compact groups}\label{ss:pcomp}

A connected $p$-compact group is a triple $(\mathbf{X},B\mathbf{X},e)$ where $\mathbf{X}$ is a connected space with $H^*(\mathbf{X};\mathbb{F}_p)$ finite, $B\mathbf{X}$ is a pointed $p$-complete space, and $e : \mathbf{X} \rightarrow \Omega B\mathbf{X}$ is a weak homotopy equivalence. It is a deep theorem in homotopy theory that connected $p$-compact groups are in 1-1 correspondence with $\mathbb{Z}_p$-root data (see \cite[Theorem 2.3]{grodal2010classification}.) When $p$ is odd a root datum is completely determined by the corresponding finite $\mathbb{Z}_p$-reflection group  (see \cite{andersen2008classification}); when $p=2$, there is a unique \textit{exotic} $2$-compact group $\DI(4)$ first constructed by Dwyer--Wilkerson \cite{DwyerWilkerson1993} determined by the $\mathbb{Z}_2$-reflection group $G_{24} \cong \GL_3(2) \times 2$ (see \cite{andersen2009classification}). If $p$ is odd, $\mathbf{X}$ is a simply connected $p$-compact group and $q$ is  prime power (prime to $p$), then by \cite[Theorem A]{BrotoMoller2007} the space $\mathbf{X}(q)$ of homotopy fixed points under the unstable Adams operation $\psi^q$ on $\mathbf{X}$ is the classifying space of a saturated fusion system $\F(q)$. These are the fusion systems of the so-called \textit{Chevalley $p$-local finite groups}. When $p=2$, the fusion systems $\Sol(q)$ are also obtained in this way, by taking $\mathbf{X}$ to be the $2$-compact group $\DI(4)$ (see \cite{LeviOliver2002}, \cite{LeviOliver2005}.) The above discussion motivates the following generalization of Theorem \ref{t:main} to all primes $p$:

\begin{Conj}\label{c:main}
Let $\mathbf{X}$ be a (simply) connected $p$-compact group, $q$ be a prime power prime to $p$ and $\F(q)$ be the saturated $p$-fusion system associated to the space of homotopy fixed points $\mathbf{X}(q)$ of the unstable Adams operation $\psi^q$. Let $\alpha$ be a compatible K\"{u}lshammer--Puig family associated to $\F(q).$  Then for each $d \ge 0$, $\m(\F(q),\alpha,d)$ is a rational polynomial in $p^l$ where $l=v_p(1-q^k)$ and $$k:=\begin{cases} \ord_p(q) & \mbox{ if $p > 2$; } \\ 2 & \mbox{ if $p=2$,} \end{cases}$$ where $\ord_p(q)$ the order of $q$ mod $p$.
\end{Conj}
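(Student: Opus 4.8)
The plan is to extend the mechanism behind Theorem \ref{t:main} from the single case $(\mathbf{X},p) = (\DI(4),2)$ to an arbitrary connected $p$-compact group, isolating the three features of that argument that were genuinely special to $\DI(4)$ and replacing each by its $p$-compact analogue. The overall shape is unchanged: one writes
$$\m(\F(q),\alpha,d) = \sum_{P \in \F(q)^{cr}/\F(q)} \w_P(\F(q),\alpha,d),$$
reduces to showing that each local term $\w_P(\F(q),\alpha,d)$ is a rational polynomial in $p^l$, and controls the degree by the rank $r = \rk(\mathbf{X})$ of the maximal torus. The invariant $l = v_p(q^k-1)$ is exactly the exponent governing the order of the finite maximal torus $\check T(q) = \check T^{h\psi^q}$ of $\mathbf{X}(q)$: taking $k = \ord_p(q)$ for $p$ odd (and $k = 2$ for $p = 2$, to capture both $v_p(q-1)$ and $v_p(q+1)$ as in the $\Sol(q)$ case) makes $\check T(q)$ a finite abelian $p$-group whose cyclic factors have order a fixed multiple of $p^l$, while the Weyl group $W_{\mathbf{X}}$ and its action on the torus stay independent of $q$.

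First I would establish the structural input: a description of $\F(q)^{cr}$ and of the groups $\Out_{\F(q)}(P)$ that is \emph{uniform in} $l$. Concretely, one wants each centric radical $P$ to be an extension of a fixed finite group by a sub-$p$-torus of $\check T(q)$ (together with, in the $p=2$ setting, the quaternion and extraspecial constituents that appeared in \cite{lynd2017weights}), arranged so that $\Out_{\F(q)}(P)$ and the combinatorial type of its action do not change as $l$ grows. This is the $p$-compact analogue of Proposition \ref{p:centrads} and the tables of \cite{lynd2017weights}.

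Granting this classification, I would adapt the counting argument of Section \ref{s:charsofdcr}. Applying the method of little groups (Theorem \ref{t:cliff}) with $G_0$ the maximal normal abelian ($p$-torus) part of $P$ reduces the enumeration of $\Out_{\F(q)}(P)$-orbits on $\Irr^d(P)$ to orbit counts of a subgroup $W \le \Out_{\F(q)}(P)$ acting on $\Irr(G_0) \cong G_0^*$, a $p$-torus of rank at most $r$. The essential polynomiality is a direct generalization of Lemma \ref{l:gl32action}: since $W_{\mathbf{X}} \hookrightarrow \GL_r(\mathbb{Z}_p)$ and reduces injectively modulo $p^k$, inclusion-exclusion over the fixed-point subspaces $C_V(X)$ expresses $|\{\alpha \in \mathcal{O} \mid \stab_W(\alpha) = W_0\}|$ as an alternating sum of the quantities $|C_V(X)|$, each of which is $p^{l\cdot \dim C_V(X)}$ up to a constant, hence a monomial in $p^l$. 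This yields the analogues of Proposition \ref{p:stabpoly} and Corollary \ref{c:rat}. One then has
$$\w_P(\F(q),\alpha,d) = \sum_{\sigma \in \N_P/\Out_{\F(q)}(P)} (-1)^{|\sigma|} \sum_{W_0 \le I(\sigma)} z(k_\alpha W_0)\, |\mathcal{O}_{W_0}|,$$
and since the block-count $z(k_\alpha W_0)$ is determined by the fixed data $W_0$ and the restriction $\alpha|_{W_0}$ — not by $l$ — the whole expression is a rational polynomial in $p^l$ of degree at most $r$.

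The main obstacle is the first step: no uniform classification of centric radical subgroups is available for a general connected $p$-compact group, and this is precisely where each exotic family must be analyzed by hand, as was done for $\DI(4)$ in \cite{lynd2017weights}. For the \emph{non-exotic} cases ($p$ odd), $\F(q)$ is the fusion system of an honest finite group of Lie type, where the statement is potentially accessible through Deligne--Lusztig theory and Robinson's conjecture for those groups; so the genuinely new content sits with the exotic families (beyond $\DI(4)$, the Clark--Ewing and Aguad\'e examples). Two further difficulties are new relative to Theorem \ref{t:main}: the compatible family $\alpha$ need no longer be trivial, so one must check that the twisted quantities $z(k_\alpha W_0)$ are insensitive to $l$ (they should be, being governed by $W_0$ and $\alpha|_{W_0}$, both fixed); and the exceptional behaviour at small $l$ — recall that even for $\DI(4)$ the value $l = 0$ required separate treatment, and one expects finitely many degenerate values of $l$ for each $\mathbf{X}$ at which the generic polynomial is not guaranteed a priori to give the correct count, although the $\DI(4)$ evidence suggests it continues to do so.
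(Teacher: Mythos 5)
You have not produced a proof, and indeed the paper does not contain one either: the statement you were given is Conjecture \ref{c:main}, which the paper explicitly leaves open. The paper's own position is that the conjecture is \emph{verified} only in the single case $(\mathbf{X},p)=(\DI(4),2)$ with $\alpha$ trivial (this is Theorem \ref{t:main}, proved via the classification in \cite{lynd2017weights} and the machine computations of Section \ref{c:thecalc}), that the Lie-type cases are ``expected'' to follow from Robinson's Conjecture \ref{c:rob} together with Deligne--Lusztig results along the lines of Proposition \ref{p:cabanes}, and that the remaining exotic cases from \cite{BrotoMoller2007} ``will be handled in future work.'' Your proposal reproduces exactly this expectation, but an expectation is not an argument.

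The concrete gap is the one you yourself flag and then pass over: everything in your plan is conditional on a classification of $\F(q)^{cr}$ and of the groups $\Out_{\F(q)}(P)$, uniform in $l$, for an \emph{arbitrary} connected $p$-compact group $\mathbf{X}$. No such classification exists; producing it is the actual mathematical content of the conjecture, and it is precisely what required the full-length paper \cite{lynd2017weights} even in the one case $\DI(4)$ (where, moreover, uniformity fails at $l=0$ and the case had to be treated separately). Without it, the little-groups reduction, the inclusion-exclusion polynomiality argument generalizing Lemma \ref{l:gl32action}, and the degree bound by $\rk(\mathbf{X})$ have nothing to act on. Your fallback for the non-exotic odd-$p$ cases is likewise a reduction to open statements: Robinson's conjecture is unproven in that generality, and the requisite analogue of Proposition \ref{p:cabanes} (a Cabanes--Enguehard-type description of the principal block as a union of Lusztig series, with compatible defect bookkeeping) is not supplied. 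So the proposal is a reasonable research program --- essentially the same program the paper itself sketches after stating the conjecture --- but it establishes nothing beyond what Theorem \ref{t:main} already gives, and it cannot be regarded as a proof of the statement.
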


When $(\F(q),\alpha)$ is realised by a block $B$ of a finite  group of Lie type of characteristic $q$, one expects Conjecture \ref{c:main} to follow from Robinson's Conjecture \ref{c:rob} and a result in Deligne--Lusztig theory along the lines of Proposition \ref{p:cabanes}. When $(\F(q),\alpha)$ is exotic, the possibilities for $\F(q)$ are listed in \cite{BrotoMoller2007}.  In these cases, Conjecture \ref{c:main} would follow from \cite[Conjecture 6.13]{KMS20}.

%change notation to \E(\mathbf{X}(q))!!
\subsection{Spetses}\label{ss:spetses}%define them, blah blah

Using the results of the present paper we give some evidence in support of Conjecture \ref{c:main} using the Brou\'{e}--Malle--Michel theory of spetses \cite{broue1999towards}. That is, we treat the topological space $\mathbf{X}:=\DI(4)$ as if it were a connected reductive algebraic group over $\bar{\mathbb{F}}_q$ with Weyl group isomorphic to $G_{24}$, and consider its $2$-local structure. A spets has a naturally defined set of unipotent characters (see \cite{broue1997complex}). We set $\Irr^u(\mathbf{X}(q))$ equal to this set of twenty-two characters, the degrees of which are listed in \cite[Section A.9]{broue2014split}. Note, that in general these degrees do not have coefficients in $\mathbb{Q}$ but rather over the finite extension  
$\mathbb{Q}[\sqrt{-7}]$ of $\mathbb{Q}$. In this case, writing $\sigma:=(1-\sqrt{-7})/2$,  define $$\phi_7'=q^3+\sigma q^2-\bar{\sigma}q-1, \hspace{2mm} \phi_{14}'=q^3-\sigma q^2-\bar{\sigma}q+1, \hspace{2mm} \phi_7''=\bar{\phi_7'} \hspace{2mm} \mbox{ and } \hspace{2mm} \phi_{14}''=\bar{\phi_{14}'},$$ where $\bar{\cdot}$ is Galois conjugation in $\mathbb{Q}[\sqrt{-7}]$, and note that $\phi_7$ and $\phi_{14}$ decompose as $\phi_7'\phi_7''$ and $\phi_{14}'\phi_{14}''$ respectively.

 For the remaining `spetsial characters' of $\mathbf{X}(q)$ we take inspiration from Deligne--Lusztig theory or, more precisely, Proposition \ref{p:cabanes}. Our starting point is the fact that $C_\F(s) \subseteq \H$ for each non-trivial fully $\F$-centralized element $s \in S$. Hence by \cite[Lemma I.1.2]{AschbacherKessarOliver2011} $C_\F(s)=C_\H(s)$ is a saturated fusion system realized by $C_H(s)$. 
Since $G_{24}$ has connection index $1$ (see \cite[App. B]{broue2018cyclotomic}) we treat $\mathbf{X}$ as if it were Langlands self-dual and define a `Lusztig series' $\E(\mathbf{X}(q),s)$ and a set $\Irr(\mathbf{X}(q))$ of irreducible characters as in (\ref{e:fls}):

\begin{equation} \E(\mathbf{X}(q),s):=\begin{cases} \Irr^u(\mathbf{X}(q)) & \mbox{ if $s=1$;} \\ \E(H,s) & \mbox{ otherwise, }
 \end{cases} \mbox{ and } \Irr(\mathbf{X}(q))=\bigcup_s \E(\mathbf{X}(q),s).\end{equation}
 
This motivates Theorem \ref{t:exowc}, which we now prove. Note that since the ring $\mathbb{Q}[\sqrt{-7}]$ is a UFD, the $2$-valuations of $\phi_7',\phi_7'',\phi_{14}'$ and $\phi_{14}''$ make sense (and in any case are all zero).

%\begin{Thm}\label{t:spetsthm}
%For each value of $d$ listed in Table \ref{table:mdod}, $\m(\F,0,d)$ is equal to the number of characters of defect $d$ in $\bigcup_s \E(\mathbf{X}(q),s)$ where $s$ runs over a complete set of $\F$-conjugacy class representatives of elements of $S$.
%\end{Thm}

\begin{proof}[Proof of Theorem \ref{t:exowc}] We require a complete set of $\F$-conjugacy class representatives of elements of $S$. We first obtain such a set for $\H$ using CHEVIE. We then determine which classes are fused in $\F$. If $x,y \in T$ are fully $\F$-centralized then $x$ and $y$ are $\F$-conjugate if and only if they are $\Aut_\F(T)$-conjugate. If $G_x$ realises $C_\F(x)$ then the Weyl group of $G_x$ is isomorphic to $\Stab_{\Aut_\F(T)}(x)$. We use MAGMA to enumerate orbits on $T$ under the action of $\Aut_\F(T)$ for $0 \le l \le 3$ from which we determine polynomial expressions for the number of class representatives in $T$ with a given type of centralizer. The classes of elements outside of $T$ are distinguished according to the type of their centralizer which is $\mathbf{A}_1(q^2)$, $\mathbf{A}_1(q)$ or $\A_0(q)$. For notational convenience write $|\mathbf{X}(q): C_{\mathbf{X}(q)}(s)|$ for the index $|H:C_H(s)|$ and set $\Irr^u(C_{\mathbf{X}(q)}(s)):=\Irr^u(C_H(s))$ whenever $s \neq 1$. Tables \ref{table:spets} and \ref{table:spets2} list the unipotent degrees of elements of  $\bigcup_s \E(\mathbf{X}(q),s)$ together with their $2$-adic valuations. By comparison with Table \ref{table:mdod} we observe that for each $d > 0$, $\m(\F,0,d)$ is exactly the number of characters in $\bigcup_s \E(\mathbf{X}(q),s)$ of defect $d$, as needed. 
\end{proof}

\newpage

%Much like the weight conjectures in \cite{kessar2018weight}, proving or disproving Conjecture \ref{c:main} is a win-win scenario. A counterexample would either suggest Conjecture \ref{c:rob} is false, or else provide a way to distinguish block-realizable fusion systems  from those which are exotic. On the other hand, the truth of Conjecture \ref{c:main} might point to the existence of a `Deligne--Lusztig theory'  for $p$-compact groups.\newpage

%The following calculation supports the claim that such a theory might exist. In \cite{}, the authors 

%INCLUDE SOME SPECULATIVE REMARKS ON GENERALISATION OF DELIGNE LUSZTIG THEORY
%DISCUSS THE FACT WEYL GROUP PROVIDES SOME UNIPOTENT CHARACTERS FOR SPIN (SEE CARTER'S BOOK), FOR SOL, WE GET 12 EXACTLY.\textbf{•}

\section*{Tables}

%\subsection{$C_S(E)$}

%$$\begin{array}{l}
%\texttt{g:=Matrix(Integers(2\textasciicircum(l+2)),[[-1,0,0],%[0,-1,0],[0,0,-1]]);} \\
%\texttt{G:=sub<GL(3,Integers(2\textasciicircum(l+2)))|G,g>;} \\
%\texttt{A:=GModule(G);} \\
%\texttt{M:=CohomologyModule(G,A);} \\
%\texttt{E:=SplitExtension(M);} \\
%\texttt{MP:=PermutationGroup(E);} \\
%\end{array}$$ 
%\textbf{[Give example]}
%\subsection{$C_S(E/Z)$}
%If $P=C_S(E/Z)$ then $C_S(E) \le P \le M_{C_S(E)}$ so we take $M_P = N_{M_{C_S(E)}}(P)$ in this case. 
%\subsection{$A$}
%A permutation representation $\texttt{MA}$ of the non-split extension $M_A = A.\GL(4,2)$ is constructed as follows:

%$$\begin{array}{l}
%\texttt{G:=GL(4,2);} \\
%\texttt{A:=GModule(G);} \\
%\texttt{M:=CohomologyModule(G,A);} \\
%\texttt{C:=CohomologyGroup(M,2);} \\
%\texttt{E:=Extension(M,C.1);} \\
%\texttt{MA:=PermutationGroup(E);} \\
%\end{array}$$

%

\begin{table}[H]
\renewcommand{\arraystretch}{1.5}\tiny
\centering
\caption{$\w_P(\D,0,d)$ for $\D \in \{\H,\F\}$ and all $d \ge 0$ when $l > 0$}
\label{table:wpdod}
\begin{tabular}{|c|c|c|c|c|c|c|c|c|c|}
\hline
   $P$ & $\D$  & $a$  & $al+4$  & $al+5$  & $al+6$ & $al+7$      & $al+8$       & $al+9$   & $al+10$  \\ \hline \hline
$S$   & $\H,\F$ &  $3$        & $-$        & $-$         & $x(4x^2-5x+2)$ & $2(10x^2-8x+1)$ & $6(5x-2)$   & $4(2x+3)$   & $16$       \\ \hline

$C_S(U)$                 & $\F$ & $3$               & $-$        & $-$             & $-\frac{1}{3}(8x^3-6x^2-8x+9)$    &$-2(4x^2-x-1)$    & $-8x$ & $0$ & $-$        \\ \hline
$C_S(E/Z)$                   & $\H,\F$        & $3$                     & $-$ & $-$ & $-\frac{8}{3} x^3+2x^2+\frac{5}{3}x-1$            & $-8x^2+8x$             & $-16x+8$         & $-8$         & $-$        \\ \hline
$C_S(E)$                 & $\F$        & $3$        & $-$        & $-$                    &               $\frac{32}{21}x^3-\frac{8}{3}x+\frac{8}{7}$               & $0$            &   $-$      & $-$     & $-$        \\ \hline

$R_1R_2Q_3\langle \tau \rangle $ & $\H,\F$  & $2$ & $-$ & $x(x-1)$   & $4x$       & $0$        & $0$        & $0$                   & $-$        \\ \hline

$Q_1R_2R_3$  & $\H$ & $2$ & $-$      & $2x(x-1)$ & $8x$ & $0$ & $0$ & $-$ & $-$                   \\ \hline

$Q_1Q_2R_3$   &  $\H,\F$ & $1$ & $-$       & $0$        & $0$        & $2(x-1)$        & $8$            & $-$ & $-$           \\ \hline

$Q_1R_2Q_3$ & $\H$ & $1$                 &$-$ & $x-1$ & $4$ & $0$ & $0$ & $-$    & $-$            \\ \hline

$Q_1Q_2'R_3$   &  $\H,\F$ & $1$ & $-$       & $0$        & $0$        & $2(x-1)$        & $8$            & $-$ & $-$           \\ \hline

$Q_1R_2Q_3'$ & $\H$ & $1$                 &$-$ & $x-1$ & $4$ & $0$ & $0$ & $-$    & $-$            \\ \hline

$Q_1Q_2R_3\langle \tau \rangle $   & $\H,\F$ & $1$         & $-$                     & $-$              & $2(x-1)$   & $10-2x$    & $-8$           & $0$  & $-$           \\ \hline

$Q_1Q_2'R_3\langle \tau' \rangle $ & $\H,\F$ & $1$       & $-$                     & $-$                & $2x-1$   & $6-2x$    & $-8$           & $0$  & $-$          \\ \hline

$R_{1^52}$ & $\H,\F$ & $1$                      &$0$ &             $-$           & $-2(x-1)$           & $-8$   &$-$         & $-$     &  $-$       \\ \hline

$Q_1Q_2Q_3$   &    $\H,\F$                 & $0$         & $-$ & $-$ & $0$ & $-$ & $0$ & $-$       & $-$         \\ \hline

$Q_1'Q_2Q_3$ &      $\H,\F$                 & $0$         & $-$ & $-$ & $0$ & $-$ & $0$ & $-$       & $-$         \\ \hline

$Q_1Q_2Q_3'$   &    $\H$                 & $0$         & $-$ & $-$ & $0$ & $-$ & $0$ & $-$       & $-$         \\ \hline 

$Q_1Q_2Q_3\langle \tau \rangle $   & $\H,\F$ & $0$ & $-$         & $-$                     & $0$                     & $0$                     & $0$                     & $0$       &$-$           \\ \hline

$Q_1Q_2Q_3'\langle \tau \rangle $   & $\H,\F$ & $0$ & $-$         & $-$                     & $0$                     & $0$                     & $0$                     & $0$       &$-$           \\ \hline

$R_{1^7}$   & $\H,\F$ & $0$ &     $0$       & $-$  &     $-$     &     $0$                    &  $-$                    & $-$  &  $-$         \\ \hline

$R'_{1^7}$    & $\H,\F$ & $0$                  & $2$            &  $-$                        &      $0$                   &     $-$                    &                        $-$ &  $-$  &  $-$               \\ \hline   

$A$             & $\F$ & $0$           & $0$ & $-$                     & $-$                     & $-$                     & $-$                     & $-$      & $-$       \\ \hline

\end{tabular}
\end{table}

\begin{table}[]
\renewcommand{\arraystretch}{1.4}
\centering
\caption{$\w_P(\D,0,d)$ for $\D \in \{\H,\F\}$ and all $d \ge 0$ when $l = 0$}
\label{table:wpdodl=0}
\begin{tabular}{|c|c|c|c|c|c|c|c|c|}
\hline
   $P$ & $\D$   & $4$  & $5$  & $6$ & $7$      & $8$       & $9$   & $10$  \\ \hline
\hline
$S$  & $\H,\F$            & $-$         & $-$ & $1$         & $6$          & $18$         & $20$ & $16$ \\ \hline
$Q\langle \tau' \rangle$   & $\H,\F$        & $-$         & $-$ & $1$         & $4$          & $-8$         & $0$  & $-$  \\ \hline
$C_S(E/Z)$   & $\H,\F$        & $-$         & $-$ & $0$         & $0$          & $-8$         & $-8$ & $-$  \\ \hline
$Q\langle \tau \rangle$     & $\H,\F$        & $-$         & $-$ & $4$         & $8$          & $-8$         & $0$  & $-$  \\ \hline
$Q$      & $\H,\F$        & $-$         & $-$ & $16$, $0$ & $-$          & $16$ & $-$  & $-$  \\ \hline
$R_{1^7}$      & $\H,\F$        & $0$		 & $-$ & $-$         & $-8$ & $-$          & $-$  & $-$  \\ \hline
$R'_{1^7}$    & $\H,\F$        & $2$ & $-$ & $0$ & $-$          & $-$          & $-$  & $-$  \\ \hline
$A$      &  $\F$     & $0$ & $-$ & $-$ & $-$          & $-$          & $-$  & $-$  \\ \hline
$C_S(E)$   &  $\F$         & $-$ & $-$ & $0$ & $0$          & $-$          & $-$  & $-$  \\ \hline
$C_S(U)$    &  $\F$        & $-$ & $-$ & $-1$ & $-4$          & $-8$          & $0$  & $-$  \\ \hline
\hline
$\m(\D,0,d)$ & $\H,\F$ & $2$         & $0$ & $22$, $5$     & $10$, $6$ &$10$, $2$         & $12$ & $16$ \\ \hline
\end{tabular}
\end{table}

\begin{landscape}
\begin{table}[]
\renewcommand{\arraystretch}{1.4}\tiny
\centering
\caption{Degrees of irreducible characters in the principal $2$-block of $\Spin_7(q)$, $q \equiv 1 \mod 4$ }
\label{table:1mod4}
\begin{tabular}{|c|c|c|c|c|}
\hline
 type of $C_{G^*}(s)$ & $\#$ classes                          & $|G^*:C_{G^*}(s)|$                                     & $\lambda(1), \lambda \in \Irr^u(C_{G^*}(s))$                                                                                                                                                                                                                               & $v_2(\lambda(1)|G^*:C_{G^*}(s)|), \lambda \in \Irr^u(C_{G^*}(s)) $ \\ \hline
 $\C_3(q)$              & $1$                                 & $1$                                              & $\makecell{\frac{1}{2}q^4\phi_4 \phi_6, q^3 \phi_3 \phi_6, \frac{1}{2}q^4 \phi_3 \phi_4, q^9, \frac{1}{2}q \phi_2^2 \phi_6, q^2 \phi_3 \phi_6, \\ \frac{1}{2} q \phi_3 \phi_4, \frac{1}{2}q^4\phi_2^2 \phi_6,1, \frac{1}{2}q\phi_4\phi_6,\frac{1}{2}q \phi_1^2\phi_3,\frac{1}{2}q^4\phi_1^2\phi_3}$ &    $0,0,0,0,1,0,0,1,0,0,2l+3,2l+3$     \\ \hline
 $\C_2(q) + \A_1(q)$     & $1$                                 & $q^4 \phi_3 \phi_6$                              & $\makecell{1,\frac{1}{2}q\phi_1^2,\frac{1}{2}q\phi_2^2,\frac{1}{2}q\phi_4,\frac{1}{2}q\phi_4,q^4, \\ q,\frac{1}{2}q^2\phi_1^2, \frac{1}{2}q^2\phi_2^2,\frac{1}{2}q^2\phi_4,\frac{1}{2}q^2\phi_4,q^5}$                                                      &    $0,2l+3,1,0,0,0,0,2l+3,1,0,0,0$     \\ \hline
 $\C_2(q)$              & $2x - 1$                         & $q^5 \phi_2 \phi_3 \phi_6$                       & $1,\frac{1}{2}q\phi_1^2, \frac{1}{2}q\phi_2^2,\frac{1}{2}q\phi_4,\frac{1}{2}q\phi_4,q^4$                                                                                                                            &   $1,2l+4,2,1,1,1$      \\ \hline
 $\A_1(q) + ~\A_1(q)$    & $2x - 1$                         & $q^7 \phi_2 \phi_3 \phi_4 \phi_6$                & $1,q,q,q^2$                                                                                                                                                                                                                           &    $2,2,2,2$     \\ \hline
 $\A_1(q) + \A_1(q)$     & $x - 1$                         & $q^7 \phi_2 \phi_3 \phi_4 \phi_6$                & $1,q,q,q^2$                                                                                                                                                                                                                           &   $2,2,2,2$      \\ \hline
 $\A_1(q^2)$            & $x$                             & $q^7 \phi_1 \phi_2^2 \phi_3 \phi_6$              & $1,q^2$                                                                                                                                                                                                                               &   $l+4,l+4$      \\ \hline
 $(\A_1(q) + \A_1(q)).2$ & $1$                                 & $\frac{1}{2} q^7 \phi_2 \phi_3 \phi_4 \phi_6$          & $1,1,q^2,q^2,2q$                                                                                                                                                                                                                      &  $1,1,1,1,2$       \\ \hline
 $\A_1(q^2).2$          & $1$                                 & $\frac{1}{2}q^7 \phi_1^2 \phi_2 \phi_3 \phi_6$         & $1,1,q^2,q^2$                                                                                                                                                                                                                         &  $2l+4,2l+4,2l+4,2l+4$       \\ \hline
 $\A_2(q)$             & $2x - 1$                         & $q^6 \phi_2^2 \phi_4 \phi_6$                     & $1,q\phi_2,q^3$                                                                                                                                                                                                                       &    $3,4,3$     \\ \hline
 $~\A_2(q).2$           & $1$                                 & $\frac{1}{2} q^6 \phi_2^2 \phi_4 \phi_6$               & $1,1,q\phi_2,q\phi_2,q^3,q^3$                                                                                                                                                                                                         &     $2,2,3,3,2,2$    \\ \hline
 $^2\A_2(q).2$         & $1$                                 & $\frac{1}{2} q^6 \phi_1^2 \phi_3 \phi_4$                & $1,1,q\phi_1,q\phi_1,q^3,q^3$                                                                                                                                                                                                         &  $2l+4,2l+4,3l+6,3l+6,2l+4,2l+4$       \\ \hline
 $\A_1(q)$              & $2x^2-3x + 1$            & $q^8 \phi_2^2 \phi_3 \phi_4 \phi_6$              & $1,q$                                                                                                                                                                                                                                 &   $3,3$      \\ \hline
 $\A_1(q)$              & $x$                             & $q^8 \phi_1 \phi_2 \phi_3 \phi_4 \phi_6$         & $1,q$                                                                                                                                                                                                                                 &    $l+4,l+4$     \\ \hline
 $~\A_1(q)$             & $4x^2-4x + 1$                   & $q^8 \phi_2^2 \phi_3 \phi_4 \phi_6$              & $1,q$                                                                                                                                                                                                                                 &   $3,3$      \\ \hline
 $~\A_1(q)$             & $x$                             & $q^8 \phi_1 \phi_2 \phi_3 \phi_4 \phi_6$         & $1,q$                                                                                                                                                                                                                                 &   $l+4,l+4$      \\ \hline
 $~\A_1(q)$             & $x$                             & $q^8 \phi_1 \phi_2 \phi_3 \phi_4 \phi_6$         & $1,q$                                                                                                                                                                                                                                 &   $l+4,l+4$      \\ \hline
$\A_0(q)$              & $\frac{4}{3} x^3 - 3x^2 + \frac{5}{3} x$ & $q^9 \phi_2^3 \phi_3 \phi_4 \phi_6$              & $1$                                                                                                                                                                                                                                   &     $4$    \\ \hline
 $\A_0(q)$              & $x^2-x$               & $q^9 \phi_1 \phi_2^2 \phi_3 \phi_4 \phi_6$       & $1$                                                                                                                                                                                                                                   &     $l+5$    \\ \hline
 $\A_0(q)$              & $2x^2-2x$                & $q^9 \phi_1 \phi_2^2 \phi_3 \phi_4 \phi_6$       & $1$                                                                                                                                                                                                                                   &    $l+5$     \\ \hline
$\A_0(q).2$            & $x-1$                         & $\frac{1}{2} q^9 \phi_2^3 \phi_3 \phi_4 \phi_6$        & $1,1$                                                                                                                                                                                                                                 &   $3,3$      \\ \hline
$\A_0(q).2$            & $x$                             & $\frac{1}{2}q^9 \phi_1 \phi_2^2 \phi_3 \phi_4 \phi_6$  & $1,1$                                                                                                                                                                                                                                 &  $l+4,l+4$       \\ \hline
 $\A_0(q).2$            & $x$                             & $\frac{1}{2} q^9 \phi_1 \phi_2^2 \phi_3 \phi_4 \phi_6$ & $1,1$                                                                                                                                                                                                                                 &  $l+4,l+4$       \\ \hline
 $\A_0(q).2$            & $x-1$                         & $\frac{1}{2} q^9 \phi_1^2 \phi_2 \phi_3 \phi_4 \phi_6$ & $1,1$                                                                                                                                                                                                                                 &   $2l+5,2l+5$      \\ \hline
\end{tabular}
\end{table}
%make table for q \equiv 3 mod 4
%change t's to x's using t = 2^{l+2} = 4x

\end{landscape}

\begin{landscape}
\begin{table}[]
\renewcommand{\arraystretch}{1.4}\tiny
\centering
\caption{Degrees of irreducible characters in the principal $2$-block of $\Spin_7(q)$, $q \equiv 3 \mod 4$ }
\label{table:3mod4}
\begin{tabular}{|c|c|c|c|c|}
\hline
type of $C_{G^*}(s)$ & $\#$ classes                          & $|G^*:C_{G^*}(s)|$                                     & $\lambda(1), \lambda \in \Irr^u(C_{G^*}(s))$                                                                                                                                                                                                                               & $v_2(\lambda(1)|G^*:C_{G^*}(s)|), \lambda \in \Irr^u(C_{G^*}(s)) $ \\ \hline

 $\C_3(q)$              & $1$                                 & $1$                                              & $\makecell{\frac{1}{2}q^4\phi_4 \phi_6, q^3 \phi_3 \phi_6, \frac{1}{2}q^4 \phi_3 \phi_4, q^9, \frac{1}{2}q \phi_2^2 \phi_6, q^2 \phi_3 \phi_6, \\ \frac{1}{2} q \phi_3 \phi_4, \frac{1}{2}q^4\phi_2^2 \phi_6,1, \frac{1}{2}q\phi_4\phi_6,\frac{1}{2}q \phi_1^2\phi_3,\frac{1}{2}q^4\phi_1^2\phi_3}$ &    $0,0,0,0,2l+3,0,0,2l+3,0,0,1,1$     \\ \hline

 $\C_2(q) + \A_1(q)$     & $1$                                 & $q^4 \phi_3 \phi_6$                              & $\makecell{1,\frac{1}{2}q\phi_1^2,\frac{1}{2}q\phi_2^2,\frac{1}{2}q\phi_4,\frac{1}{2}q\phi_4,q^4, \\ q,\frac{1}{2}q^2\phi_1^2, \frac{1}{2}q^2\phi_2^2,\frac{1}{2}q^2\phi_4,\frac{1}{2}q^2\phi_4,q^5}$                                                      &    $0,1,2l+3,0,0,0,0,1,2l+3,0,0,0$     \\ \hline

 $\C_2(q)$              & $2x-1$                         & $q^5 \phi_1 \phi_3 \phi_6$                       & $1,\frac{1}{2}q\phi_1^2, \frac{1}{2}q\phi_2^2,\frac{1}{2}q\phi_4,\frac{1}{2}q\phi_4,q^4$                                                                                                                            &   $1,2,2l+4,1,1,1$       \\ \hline

 $\A_1(q) + ~\A_1(q)$    & $2x-1$                         & $q^7 \phi_1 \phi_3 \phi_4 \phi_6$                & $1,q,q,q^2$                                                                                                                                                                                                                           &     $2,2,2,2$     \\ \hline

 $\A_1(q) + \A_1(q)$     & $x - 1$                         & $q^7 \phi_1 \phi_3 \phi_4 \phi_6$                & $1,q,q,q^2$                                                                                                                                                                                                                           &   $2,2,2,2$      \\ \hline

 $\A_1(q^2)$            & $x$                             & $q^7 \phi_1^2 \phi_2 \phi_3 \phi_6$              & $1,q^2$                                                                                                                                                                                                                               &  $l+4,l+4$        \\ \hline

$(\A_1(q) + \A_1(q)).2$ & $1$                                 & $\frac{1}{2} q^7 \phi_1 \phi_3 \phi_4 \phi_6$          & $1,1,q^2,q^2,2q$                                                                                                                                                                                                                      &  $1,1,1,1,2$       \\ \hline

 $\A_1(q^2).2$          & $1$                                 & $\frac{1}{2}q^7 \phi_1 \phi_2^2 \phi_3 \phi_6$         & $1,1,q^2,q^2$                                                                                                                                                                                                                         &     $2l+4,2l+4,2l+4,2l+4$     \\ \hline

 $^2\A_2(q)$             & $2x - 1$                         & $q^6 \phi_1^2\phi_3 \phi_4$                     & $1,q\phi_1,q^3$                                                                                                                                                                                                                       &    $3,4,3$     \\ \hline

 $^2\A_2(q).2$         & $1$                                 & $\frac{1}{2} q^6 \phi_1^2 \phi_3 \phi_4$                & $1,1,q\phi_1,q\phi_1,q^3,q^3$                                                                                                                                                                                                         &   $2,2,3,3,2,2$        \\ \hline

 $~\A_2(q).2$           & $1$                                 & $\frac{1}{2} q^6 \phi_2^2 \phi_4 \phi_6$               & $1,1,q\phi_2,q\phi_2,q^3,q^3$                                                                                                                                                                                                         &     $2l+4,2l+4,3l+6,3l+6,2l+4,2l+4$    \\ \hline

 $\A_1(q)$              & $2x^2-3x+1$                             & $q^8 \phi_1^2 \phi_3 \phi_4 \phi_6$         & $1,q$                                                                                                                                                                                                                                 &  $3,3$       \\ \hline

 $\A_1(q)$              & $x$            & $q^8 \phi_1 \phi_2 \phi_3 \phi_4 \phi_6$              & $1,q$                                                                                                                                                                                                                                 & $l+4,l+4$      \\ \hline

 $~\A_1(q)$             & $4x^2 - 4x + 1$                   & $q^8 \phi_1^2 \phi_3 \phi_4 \phi_6$              & $1,q$                                                                                                                                                                                                                                 &   $3,3$     \\ \hline

 $~\A_1(q)$             & $x$                             & $q^8 \phi_1 \phi_2 \phi_3 \phi_4 \phi_6$         & $1,q$                                                                                                                                                                                                                                 &   $l+4,l+4$     \\ \hline

 $~\A_1(q)$             & $x$                             & $q^8 \phi_1 \phi_2 \phi_3 \phi_4 \phi_6$         & $1,q$                                                                                                                                                                                                                                 &  $l+4,l+4$       \\ \hline

 $\A_0(q)$              & $\frac{4}{3} x^3 - 3 x^2 + \frac{5}{3} x$ & $q^9 \phi_1^3 \phi_3 \phi_4 \phi_6$              & $1$                                                                                                                                                                                                                                   &     $4$    \\ \hline

$\A_0(q)$              & $x^2-x$               & $q^9 (\phi_1)^2 \phi_2 \phi_3 \phi_4 \phi_6$       & $1$                                                                                                                                                                                                                                   &   $l+5$      \\ \hline

 $\A_0(q)$              & $2x^2-2x$                & $q^9 \phi_1^2 \phi_2 \phi_3 \phi_4 \phi_6$       & $1$                                                                                                                                                                                                                                   & $l+5$        \\ \hline

 $\A_0(q).2$            & $x - 1$                         & $\frac{1}{2} q^9 \phi_1^3 \phi_3 \phi_4 \phi_6$ & $1,1$                                                                                                                                                                                                                                 &   $3,3$       \\ \hline

 $\A_0(q).2$            & $x$                             & $\frac{1}{2}q^9 \phi_1^2 \phi_2 \phi_3 \phi_4 \phi_6$  & $1,1$                                                                                                                                                                                                                                 &   $l+4,l+4$    \\ \hline

 $\A_0(q).2$            & $x$                             & $\frac{1}{2} q^9 \phi_1^2 \phi_2 \phi_3 \phi_4 \phi_6$ & $1,1$                                                                                                                                                                                                                                 &    $l+4,l+4$      \\ \hline

 $\A_0(q).2$            & $x - 1$                         & $\frac{1}{2} q^9 \phi_1 \phi_2^2 \phi_3 \phi_4 \phi_6$        & $1,1$                                                                                                                                                                                                                                 &  $2l+5,2l+5$    \\ \hline

\end{tabular}
\end{table}
%make table for q \equiv 3 mod 4
%change t's to x's using t = 2^{l+2} = 4x

\end{landscape}

\begin{landscape}
\begin{table}[]
\renewcommand{\arraystretch}{1.4}\tiny
\centering
\caption{Degrees of irreducible characters in $\Irr(\mathbf{X}(q))$, $q \equiv 1 \mod 4$ }
\label{table:spets}
\begin{tabular}{|c|c|c|c|c|}
\hline
 type of $C_{\mathbf{X}(q)}(s)$ & $\#$ classes         & $|\mathbf{X}(q):C_{\mathbf{X}(q)}(s)|$                                     & $\lambda(1), \lambda \in \Irr^u(C_{\mathbf{X}(q)}(s))$                                                                                                                                                                                                                               & $v_2(\lambda(1)|\mathbf{X}(q):C_{\mathbf{X}(q)}(s)|)$ \\ \hline

\hline
$\mathbf{X}(q)$            & $1$         & $1$ & 

$\makecell{1,
\frac{\sqrt{-7}}{14}q \phi_3 \phi_4 \phi_6 \phi_7' \phi_{14}'', 
\frac{-\sqrt{-7}}{14}q \phi_3 \phi_4 \phi_6 \phi_7'' \phi_{14}', \\ 
\frac{1}{2}q \phi_2^2 \phi_3 \phi_6 \phi_{14},
\frac{1}{2}q \phi_1^2 \phi_3 \phi_6 \phi_{7}, \\
\frac{-\sqrt{-7}}{7}q \phi_1^3 \phi_2^3 \phi_3 \phi_4 \phi_{6},  
\frac{-\sqrt{-7}}{7}q \phi_1^3 \phi_2^3 \phi_3 \phi_4 \phi_{6}, \\ 
\frac{-\sqrt{-7}}{7}q \phi_1^3 \phi_2^3 \phi_3 \phi_4 \phi_{6},  
q^3 \phi_7 \phi_{14}, \\
\frac{1}{2}q^4 \phi_2^3 \phi_4 \phi_6 \phi_{14},  
\frac{1}{2}q^4 \phi_2^3 \phi_4 \phi_6 \phi_{14}, \\
\frac{-1}{2}q^4 \phi_1^3\phi_3 \phi_4 \phi_{7},  
\frac{-1}{2}q^4 \phi_1^3\phi_3 \phi_4 \phi_{7}, \\ 
q^6 \phi_7 \phi_{14},  
\frac{\sqrt{-7}}{14}q^8 \phi_3 \phi_4 \phi_6 \phi_7' \phi_{14}'', \\
\frac{-\sqrt{-7}}{14}q^8 \phi_3 \phi_4 \phi_6 \phi_7'' \phi_{14}', 
\frac{1}{2}q^8 \phi_2^2 \phi_3 \phi_6 \phi_{14},  \\
\frac{1}{2}q^8 \phi_1^2 \phi_3 \phi_6 \phi_{7},  
\frac{\sqrt{-7}}{7}q^8 \phi_1^3 \phi_2^3 \phi_3 \phi_4 \phi_{6}, \\ 
\frac{\sqrt{-7}}{7}q^8 \phi_1^3 \phi_2^3 \phi_3 \phi_4 \phi_{6},  
\frac{\sqrt{-7}}{7}q^8 \phi_1^3 \phi_2^3 \phi_3 \phi_4 \phi_{6},  
q^{21}}$

 & $\makecell{0,0,0,1,2l+3,3l+10, \\ 3l+10,3l+10,0,3,3, 3l+6,3l+6, \\ 0,0,0,1,2l+3,3l+10,3l+10,3l+10,0}$                  \\ \hline

$\mathbf{B}_3(q)$              & $1$                                 & $1$                                              & $\makecell{\frac{1}{2}q^4\phi_4 \phi_6, q^3 \phi_3 \phi_6, \frac{1}{2}q^4 \phi_3 \phi_4, q^9, \frac{1}{2}q \phi_2^2 \phi_6, q^2 \phi_3 \phi_6, \\ \frac{1}{2} q \phi_3 \phi_4, \frac{1}{2}q^4\phi_2^2 \phi_6,1, \frac{1}{2}q\phi_4\phi_6,\frac{1}{2}q \phi_1^2\phi_3,\frac{1}{2}q^4\phi_1^2\phi_3}$ &    $0,0,0,0,1,0,0,1,0,0,2l+3,2l+3$   \\ \hline 

$\C_2(q)$              & $2x - 1$                         & $q^5 \phi_2 \phi_3 \phi_6$                       & $1,\frac{1}{2}q\phi_1^2, \frac{1}{2}q\phi_2^2,\frac{1}{2}q\phi_4,\frac{1}{2}q\phi_4,q^4$                                                                                                                            &   $1,2l+4,2,1,1,1$      \\ \hline

$\A_3(q)$              & $1$                         & $q^3 \phi_2 \phi_6$                       & $q^6,q^3\phi_3,q^2\phi_4,q\phi_3,1$                                                                                                                            &   $1,1,2,1,1$      \\ \hline

$\A_2(q)$              & $2x-2$                         & $q^6 \phi_2^2 \phi_4\phi_6$                       & $1, q\phi_2,q^3$                                                                                                                            &   $3,4,3$      \\ \hline

$\A_1(q)$              & $2x^2-5x+3$                         & $q^8 \phi_2^2 \phi_3\phi_4\phi_6$                       & $1, q$                                                                                                                            &   $3,3$      \\ \hline

$\A_1(q)+\A_1(q)$              & $x-1$ & $q^7 \phi_2 \phi_3\phi_4\phi_6$                       & $1, q,q,q^2$                                                                                                                            &   $2,2,2,2$      \\ \hline

$\A_0(q)$              & $\frac{4}{21} x^3-x^2+\frac{5}{3} x - \frac{6}{7} $& $q^9 \phi_2^3 \phi_3\phi_4\phi_6$                       & $1$ &   $4$      \\ \hline

$\A_1(q^2)$              & $x$& $q^7 \phi_1\phi_2^2 \phi_3\phi_6$                       & $1,q^2$ &   $l+4,l+4$      \\ 
\hline
$\A_1(q)$              & $x$& $q^8 \phi_1\phi_2 \phi_3\phi_4\phi_6$                       & $1,q$ &   $l+4,l+4$      \\ 
\hline

$\A_0(q)$              & $x^2-x$& $q^9 \phi_1\phi_2^2 \phi_3\phi_4\phi_6$                       & $1$ &   $l+5$     \\ 
\hline

\end{tabular}
\end{table}
%make table for q \equiv 3 mod 4
%change t's to x's using t = 2^{l+2} = 4x

\end{landscape}

\begin{landscape}
\begin{table}[]
\renewcommand{\arraystretch}{1.4}\tiny
\centering
\caption{Degrees of irreducible characters in $\Irr(\mathbf{X}(q))$, $q \equiv 3 \mod 4$ }
\label{table:spets2}
\begin{tabular}{|c|c|c|c|c|}
\hline
 type of $C_{\mathbf{X}(q)}(s)$ & $\#$ classes         & $|\mathbf{X}(q):C_{\mathbf{X}(q)}(s)|$                                     & $\lambda(1), \lambda \in \Irr^u(C_{\mathbf{X}(q)}(s))$                                                                                                                                                                                                                               & $v_2(\lambda(1)|\mathbf{X}(q):C_{\mathbf{X}(q)}(s)|)$ \\ \hline

\hline
$\mathbf{X}(q)$            & $1$         & $1$ & 

$\makecell{1,
\frac{\sqrt{-7}}{14}q \phi_3 \phi_4 \phi_6 \phi_7' \phi_{14}'', 
\frac{-\sqrt{-7}}{14}q \phi_3 \phi_4 \phi_6 \phi_7'' \phi_{14}', \\ 
\frac{1}{2}q \phi_2^2 \phi_3 \phi_6 \phi_{14},
\frac{1}{2}q \phi_1^2 \phi_3 \phi_6 \phi_{7}, \\
\frac{-\sqrt{-7}}{7}q \phi_1^3 \phi_2^3 \phi_3 \phi_4 \phi_{6},  
\frac{-\sqrt{-7}}{7}q \phi_1^3 \phi_2^3 \phi_3 \phi_4 \phi_{6}, \\ 
\frac{-\sqrt{-7}}{7}q \phi_1^3 \phi_2^3 \phi_3 \phi_4 \phi_{6},  
q^3 \phi_7 \phi_{14}, \\
\frac{1}{2}q^4 \phi_2^3 \phi_4 \phi_6 \phi_{14},  
\frac{1}{2}q^4 \phi_2^3 \phi_4 \phi_6 \phi_{14}, \\
\frac{-1}{2}q^4 \phi_1^3\phi_3 \phi_4 \phi_{7},  
\frac{-1}{2}q^4 \phi_1^3\phi_3 \phi_4 \phi_{7}, \\ 
q^6 \phi_7 \phi_{14},  
\frac{\sqrt{-7}}{14}q^8 \phi_3 \phi_4 \phi_6 \phi_7' \phi_{14}'', \\
\frac{-\sqrt{-7}}{14}q^8 \phi_3 \phi_4 \phi_6 \phi_7'' \phi_{14}', 
\frac{1}{2}q^8 \phi_2^2 \phi_3 \phi_6 \phi_{14},  \\
\frac{1}{2}q^8 \phi_1^2 \phi_3 \phi_6 \phi_{7},  
\frac{\sqrt{-7}}{7}q^8 \phi_1^3 \phi_2^3 \phi_3 \phi_4 \phi_{6}, \\ 
\frac{\sqrt{-7}}{7}q^8 \phi_1^3 \phi_2^3 \phi_3 \phi_4 \phi_{6},  
\frac{\sqrt{-7}}{7}q^8 \phi_1^3 \phi_2^3 \phi_3 \phi_4 \phi_{6},  
q^{21}}$

 & $\makecell{0,0,0,2l+3,1,3l+10, \\ 3l+10,3l+10,0,3l+6,3l+6, 3,3, \\ 0,0,0,2l+3,1,3l+10,3l+10,3l+10,0}$                  \\ \hline

$\mathbf{B}_3(q)$              & $1$                                 & $1$                                              & $\makecell{\frac{1}{2}q^4\phi_4 \phi_6, q^3 \phi_3 \phi_6, \frac{1}{2}q^4 \phi_3 \phi_4, q^9, \frac{1}{2}q \phi_2^2 \phi_6, q^2 \phi_3 \phi_6, \\ \frac{1}{2} q \phi_3 \phi_4, \frac{1}{2}q^4  \phi_2^2 \phi_6,1, \frac{1}{2}q\phi_4\phi_6,\frac{1}{2}q \phi_1^2\phi_3,\frac{1}{2}q^4\phi_1^2\phi_3}$ &    $0,0,0,0,2l+3,0,0,2l+3,0,0,1,1$   \\ \hline 

$\C_2(q)$              & $2x - 1$                         & $q^5 \phi_1 \phi_3 \phi_6$                       & $1,\frac{1}{2}q\phi_1^2, \frac{1}{2}q\phi_2^2,\frac{1}{2}q\phi_4,\frac{1}{2}q\phi_4,q^4$                                                                                                                            &   $1,2,2l+4,1,1,1$      \\ \hline

$^2\A_3(q)$              & $1$                         & $q^3 \phi_1 \phi_3$                       & $q^6,q^3\phi_6,q^2\phi_4,q\phi_6,1$                                                                                                                            &   $1,1,2,1,1$      \\ \hline

$^2\A_2(q)$              & $2x-2$                         & $q^6 \phi_1^2 \phi_3\phi_4$                       & $1, q\phi_1,q^3$                                                                                                                            &   $3,4,3$      \\ \hline

$\A_1(q)$              & $2x^2-5x+3$                         & $q^8 \phi_1^2 \phi_3\phi_4\phi_6$                       & $1, q$                                                                                                                            &   $3,3$      \\ \hline

$\A_1(q)+\A_1(q)$              & $x-1$ & $q^7 \phi_1 \phi_3\phi_4\phi_6$                       & $1, q,q,q^2$                                                                                                                            &   $2,2,2,2$      \\ \hline

$\A_0(q)$              & $\frac{4}{21} x^3-x^2+\frac{5}{3} x - \frac{6}{7} $& $q^9 \phi_1^3 \phi_3\phi_4\phi_6$                       & $1$ &   $4$      \\ \hline

$\A_1(q^2)$              & $x$& $q^7 \phi_1^2\phi_2 \phi_3\phi_6$                       & $1,q^2$ &   $l+4,l+4$      \\ 
\hline

$\A_1(q)$              & $x$& $q^8 \phi_1\phi_2 \phi_3\phi_4\phi_6$                       & $1,q$ &   $l+4,l+4$      \\ 
\hline

$\A_0(q)$              & $x^2-x$& $q^9 \phi_1^2\phi_2 \phi_3\phi_4\phi_6$                       & $1$ &   $l+5$     \\ 
\hline

\end{tabular}
\end{table}
%make table for q \equiv 3 mod 4
%change t's to x's using t = 2^{l+2} = 4x

\end{landscape}

\bibliographystyle{amsalpha}
\bibliography{mybib}
\end{document}